\newcommand{\later}[1]{}
\newcommand{\old}[1]{}
\newtheorem{theorem}{Theorem}[section]
\newtheorem{lemma}[theorem]{Lemma}
\newtheorem{observation}[theorem]{Observation}
\newtheorem{proposition}[theorem]{Proposition}
\newtheorem{corollary}[theorem]{Corollary}
\newtheorem{problem}[theorem]{Problem}
\newtheorem{conjecture}[theorem]{Conjecture}
\newcommand{\ie}{{i.e.}}
\newcommand{\eg}{{e.g.}}
\newcommand{\eps}{\varepsilon}
\newcommand{\dist}{{\rm dist}}
\newcommand{\x}{{\rm cr}}
\newcommand{\NN}{\mathbb{N}} 
\newcommand{\RR}{\mathbb{R}} 
\def\R{\mathcal R}
\newcommand{\I}{\mathrm{i}}
\tikzstyle{vtx} = [circle, fill, inner sep=0.7]
\title{On multiplicities of interpoint distances}
\author{Felix Christian Clemen}
\address{Felix Christian Clemen \newline University of Victoria, Canada}
\email{fclemen@uvic.ca}
\thanks{The first author is partially supported by a PIMS Postdoctoral Fellowship.}
\author{Adrian Dumitrescu}
\address{Adrian Dumitrescu \newline Algoresearch L.L.C., Milwaukee, WI, USA}
\email{ad.dumitrescu@algoresearch.org}
\author{Dingyuan Liu}
\address{Dingyuan Liu \newline Karlsruhe Institute of Technology, Germany}
\email{liu@mathe.berlin}
\date{}
\begin{document}

\begin{abstract}
Given a set $X\subseteq\RR^2$ of $n$ points and a distance $d>0$, the multiplicity of $d$ is the
number of times the distance $d$ appears between points in $X$. Let $a_1(X) \geq a_2(X) \geq \cdots \geq a_m(X)$ 
denote the multiplicities of the $m$ distances determined by $X$ and let
$a(X)=\left(a_1(X),\dots,a_m(X)\right)$. In this paper, we study several
questions from Erd\H{o}s's time regarding distance multiplicities. Among other results,
we show that: 
\begin{enumerate}
\item If $X$ is convex or ``not too convex'', then there exists a distance other than the diameter
  that has multiplicity at most $n$.  
\item There exists a set $X \subseteq \RR^2$ of $n$ points, such that many distances occur with high
  multiplicity. In particular, at least $n^{\Omega(1/\log\log{n})}$ distances 
  have superlinear multiplicity in $n$.
\item For any (not necessarily fixed) integer $1\leq k\leq\log{n}$,
  there exists $X\subseteq\RR^2$ of $n$ points, such that the difference between
  the $k^{\text{th}}$ and $(k+1)^{\text{th}}$ largest multiplicities is at least
  $\Omega(\frac{n\log{n}}{k})$. 
  Moreover, the distances in $X$ with the largest $k$ multiplicities can be prescribed.
\item For every $n\in\NN$, there exists $X\subseteq\RR^2$ of $n$ points, not all
  collinear or cocircular, such that $a(X)= (n-1,n-2,\ldots,1)$. There also
  exists $Y\subseteq\RR^2$ of $n$ points with pairwise distinct distance
  multiplicities and $a(Y) \neq (n-1,n-2,\ldots,1)$. 
\end{enumerate}

\medskip \noindent
\textbf{\small Keywords}: distance multiplicity, convex layer decomposition, integer grid.

\end{abstract}

\maketitle

\section{Introduction} \label{sec:intro}

Let $\dist(x,y)$ denote the Euclidean distance between points $x$ and $y$ in the plane.
Given a finite planar point set $X=\{x_1,\ldots,x_n\}$, let $d_1,\ldots,d_m$ denote
the distinct distances between points in $X$, where $m=m(X) \leq {n \choose 2}$. 
The \emph{multiplicity} of $d_k$ in $X$ is defined as
\[ a_k(X) = \lvert\{ (i,j) \colon 1 \leq i < j \leq n,\,\dist(x_i,x_j)=d_k\}\rvert. \]
We arrange the $m$ multiplicities as $a_1(X) \geq a_2(X) \geq \cdots \geq a_m(X)$,
irrespective to relative values of the $d_k$, 
and let $a(X)=\left(a_1(X),\dots,a_m(X)\right)$. In this paper, we revisit
several questions from the time of Erd\H{o}s regarding distance multiplicities: 

\begin{enumerate} \itemsep 3pt

\item \label{q1} Is it possible that all distances except the diameter have multiplicity
  larger than $n$? See~\cite{Er84} and~\cite[Conjecture~4]{EF95a}.

\item \label{q2} Can it happen that there are many distances of multiplicity at least $cn$,
  where $c>1$ is a constant, or even superlinear in $n$? See~\cite{EP90} and~\cite[Problem 11]{Er97}.

\item \label{q3} Estimate $\max_{X\subseteq\mathbb{R}^2,\,|X|=n} (a_1(X) -a_2(X))$, and more generally,
  \[\max_{X\subseteq\mathbb{R}^2,\,|X|=n} (a_k(X) -a_{k+1}(X))\] as well as possible.
  See~\cite[Section~3]{Er83}.

\item \label{q4} For sufficiently large $n\in\NN$, is it true that
  $a(X)=(n-1,n-2,\dots,1)$ if and only if $X$ consists of equidistant points on
  a line or on a circle? See~\cite[p.~135]{Er84}. 

\end{enumerate}

We answer Questions~\eqref{q2} and~\eqref{q4}, and give partial answers to the other two.

\subsection{Another distance with multiplicity at most $n$ besides the diameter}

The \emph{diameter} of $X$, denoted $\Delta=\Delta(X)$, is the maximum distance
between points in $X$. Further, denote by $\Delta_2=\Delta_2(X)$ and $\delta=\delta(X)$
the second largest and the smallest distances in $X$, respectively,
and by $\mu(X,d)$ the multiplicity of the distance $d$ in $X$. 

Hopf and Pannwitz~\cite{HP34} proved that the
multiplicity of the diameter among any $n$ points in the plane is at
most $n$. Erd\H{o}s~\cite{Er84} further conjectured that for any
$n$-element point set $X\subseteq\mathbb{R}^2$, there must be a second
distance besides the diameter that has multiplicity at most $n$. 

\begin{conjecture}[{Erd\H{o}s~\cite{Er84}, see also~\cite[Conjecture~4]{EF95a}}]
  \label{conj:second}
Let $n \geq5$. For any $X\subseteq\mathbb{R}^2$ with $|X|=n$, it is not
possible that every distance except the diameter occurs more than $n$ times. 
\end{conjecture}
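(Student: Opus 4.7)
The plan is to argue by contradiction: suppose $X\subseteq\RR^2$ with $|X|=n\geq 5$ has every distance other than the diameter $\Delta$ occurring more than $n$ times, and aim for a contradiction. Let $m=m(X)$ be the number of distinct distances. By Hopf--Pannwitz, $\mu(X,\Delta)\leq n$. Writing
\[
\binom{n}{2} \;=\; \mu(X,\Delta) + \sum_{d \neq \Delta}\mu(X,d) \;>\; n + (m-1)(n+1),
\]
we obtain the crude bound $m \leq (n-1)/2$. This is still compatible with the known linear-up-to-logs lower bounds on distinct distances, so by itself it yields nothing; a finer structural argument targeting the top few multiplicities is required.

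The natural next target is the second largest distance $\Delta_2$. I would push a Hopf--Pannwitz-style crossing argument: if two $\Delta_2$-pairs $ab$ and $cd$ cross, then at least one of $ac, ad, bc, bd$ exceeds $\Delta_2$, hence equals $\Delta$, and this forces a rigid configuration relating $a,b,c,d$ and their diametral partners. Combining such forbidden crossings with the matching-like structure of $\Delta$-pairs (whose graph is a thrackle with at most $n$ edges), the goal is to upgrade the classical Vesztergombi-type bound $\mu(X,\Delta_2) = O(n)$ into the sharp bound $\mu(X,\Delta_2)\leq n$, which would immediately close the case.

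Should the $\Delta_2$-argument deliver only $\mu(X,\Delta_2) \leq cn$ with $c>1$, I would iterate. Decompose $X$ into convex layers $L_1,L_2,\dots$ and track, layer by layer, which of the top $k$ ranked distances can be realized between a given pair of layers (long distances inherently prefer outer layers). The target inequality is that the cumulative multiplicity of the $k$ largest distances is at most $(1+o(1))kn$; together with the crude bound $m \leq (n-1)/2$ and the total sum $\binom{n}{2}$, a pigeonhole step then forces some $\Delta_k$ with $k\leq m$ to satisfy $\mu(X,\Delta_k)\leq n$, contradicting the assumption.

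The hard part is the quantitatively sharp step, going from $cn$ to $n$ in the bound on $\mu(X,\Delta_k)$ for some small $k$. In convex position this tight control is available and underlies the partial results stated in the abstract, but for arbitrary $X$ there is no method that prevents interior points from conspiring to inflate the second-largest (or few-largest) multiplicities slightly above $n$ while the diameter itself still obeys Hopf--Pannwitz. This is precisely the content of Erd\H{o}s's conjecture, and it is the step at which I expect the proposal to stall without a genuinely new structural insight into how $\Delta_k$-pairs interact with the diametral graph.
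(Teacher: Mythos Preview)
The statement you are attempting is Conjecture~\ref{conj:second}, and the paper does \emph{not} prove it: immediately after stating it, the authors note that Erd\H{o}s and Fishburn verified $n=5,6$ and that the case $n\geq 7$ is still open. The paper establishes only the two partial cases given in Theorems~\ref{thm:second} and~\ref{thm:dense} (convex position, and ``not too convex'' in the sense that the first two convex layers are small). So there is no full proof in the paper to compare your proposal against, and a complete proof along the lines you sketch would in fact resolve an open problem.

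That said, your proposal is a reasonable outline of the landscape, and you are candid that it stalls at exactly the right place. A few specific remarks. First, your displayed counting inequality is garbled: from $\mu(X,\Delta)\geq 1$ and $\mu(X,d)\geq n+1$ for the remaining $m-1$ distances one gets $\binom{n}{2}\geq 1+(m-1)(n+1)$, hence $m\leq n/2$; the ``$n+$'' on the right should not be there (you are using the Hopf--Pannwitz \emph{upper} bound on $\mu(X,\Delta)$ where a \emph{lower} bound is needed). Second, your idea of a convex-layer decomposition tracking which large distances can appear between which layers is precisely what the paper exploits in Theorem~\ref{thm:dense}: using Vesztergombi's structural results, the authors show $\mu(X,\Delta_2)\leq\min\{\tfrac32(|L_1|+|L_2|),\,\tfrac43|L_1|+2|L_2|,\,2|L_1|+|L_2|\}$, which is $\leq n$ only under a size hypothesis on $L_1,L_2$. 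Third, your hope of sharpening Vesztergombi's bound to $\mu(X,\Delta_2)\leq n$ in general is false: the paper cites (and in Proposition~\ref{thm:cons} builds on) constructions where $\mu(X,\Delta_2)>n$, so the conjecture cannot be settled by looking at $\Delta_2$ alone. This is consistent with your final paragraph, but it means your ``natural next target'' is already known to be a dead end in isolation.
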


The condition $n \geq 5$ is necessary, since for $n=4$ we can glue two equilateral triangles
of the same side length together as a rhombus and this gives a counterexample. Erd\H{o}s and
Fishburn~\cite{EF95a} proved the Conjecture~for $n=5,6$ and the case of $n \geq 7$ is still open.   
Here we confirm Conjecture~\ref{conj:second} in two special cases.
A point set $X\subseteq\mathbb{R}^2$ is said to be \emph{convex}, or
in \emph{convex position} if no point lies inside the convex hull of other points. 

\begin{theorem} \label{thm:second}
Let $n\geq5$. For any convex point set $X\subseteq\mathbb{R}^2$ with
$|X|=n$, it cannot happen that all distances except the diameter occur
more than $n$ times. 
\end{theorem}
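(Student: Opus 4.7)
I would argue by contradiction. Suppose $X\subseteq\mathbb{R}^2$ is in convex position with $|X|=n\ge 5$, and every distance of $X$ other than the diameter $\Delta=\Delta(X)$ has multiplicity at least $n+1$. By Hopf--Pannwitz, $\mu(X,\Delta)\le n$. Letting $m$ denote the number of distinct distances in $X$, the hypothesis combined with Hopf--Pannwitz gives
\[\binom{n}{2}=\sum_{i=1}^{m}a_i(X)\ge 1+(m-1)(n+1),\]
which rearranges to $m\le \lfloor n/2\rfloor$.

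On the other hand, Altman's classical theorem states that any $n$ points in convex position in the plane determine at least $\lfloor n/2\rfloor$ distinct pairwise distances. Hence $m=\lfloor n/2\rfloor$ exactly, and the above counting is tight. Tracking this tightness refines the picture: when $n$ is even, the diameter multiplicity is forced to equal $1$ and every other distance has multiplicity exactly $n+1$; when $n$ is odd, $\mu(X,\Delta)\le (n+3)/2$ and the other $\tfrac{n-3}{2}$ multiplicities obey a similarly rigid profile.

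To conclude, I would appeal to an extremal characterization of convex $n$-gons attaining Altman's lower bound: for $n\ge 7$ such configurations coincide (up to similarity) with vertex sets of regular $n$-gons. In a regular $n$-gon every pairwise distance has multiplicity exactly $n$ (with the diameter attaining multiplicity $n/2$ when $n$ is even), so no distance reaches multiplicity $\ge n+1$, contradicting the hypothesis. The small remaining cases $n=5,6$ require no extra argument, since Erdős--Fishburn~\cite{EF95a} have already resolved Conjecture~\ref{conj:second} unconditionally there.

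\textbf{Main obstacle.} The delicate step is the extremal characterization of Altman-tight convex $n$-gons. In particular, for $n$ even the counting forces the diameter to be realized by a unique pair, whereas a regular $n$-gon has $n/2$ diameter pairs; one must therefore rule out hypothetical non-regular extremal configurations with $\lfloor n/2\rfloor$ distinct distances and a single diameter pair. I expect this to proceed either by invoking a refinement of Altman's theorem from the Erdős--Fishburn literature, or by a direct convex-geometric argument — for instance, combining the perimeter bound $P(X)\le \pi\Delta(X)$ with the extremely rigid multiplicity profile that the counting forces — to derive additional constraints that break tightness.
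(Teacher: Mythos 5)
Your plan is essentially the paper's proof: Altman's lower bound of $\lfloor n/2\rfloor$ distinct distances for convex sets, combined with the count $\binom{n}{2}\ge 1+(m-1)(n+1)$, forces $m=\lfloor n/2\rfloor$, after which one inspects the known extremal configurations. The step you flag as the main obstacle is exactly the classification the paper cites: for odd $n$ Altman already showed the unique extremal set is the regular $n$-gon, and for even $n\ge 8$ Fishburn (\emph{Convex polygons with few intervertex distances}, 1995) showed the only convex $n$-sets with $n/2$ distinct distances are the regular $n$-gon and the regular $(n+1)$-gon minus one vertex. So your guessed characterization (``regular $n$-gons only for $n\ge 7$'') is slightly off in the even case, but harmlessly so: the second family has all multiplicities equal to $n-1$, and for $n=6$ Fishburn's list gives multiplicity vectors $(6,6,3)$ or $(5,5,5)$, so no extremal configuration has a distance occurring more than $n$ times and your contradiction goes through. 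In particular you need not separately rule out ``non-regular extremal configurations with a single diameter pair'' by a perimeter argument --- the classification does it for you --- and you need not defer $n=5,6$ to Erd\H{o}s--Fishburn, since $R_5$ and the two $n=6$ profiles are handled directly.
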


Given a point set $X\subseteq\RR^2$, let $L_1=L_1(X)$ be the set of vertices of
the convex hull of $X$, called the \emph{first (outer) convex layer} of
$X$. Similarly, the \emph{second convex layer} $L_2=L_2(X)$ of $X$ is the set of
vertices of the convex hull of $X\setminus L_1$. Note that $X$ is convex if and only if $L_2$ is
empty. It follows from definition that $X$ is convex if and only if 
$|L_1|=|X|$. Next we confirm Conjecture~\ref{conj:second} for ``not too convex''
point sets, namely for point sets whose first and second convex layers are not
too large. 

\begin{theorem} \label{thm:dense}
Let $X\subseteq\RR^2$ be a set of $n \geq 2$ points. If
\[\min\left\{\frac{3}{2}(|L_1|+|L_2|),\,\frac{4}{3}|L_1|+2|L_2|,\,2|L_1|+|L_2|\right\} \leq n,\]
  then the second largest distance in $X$ can occur at most $n$ times.
\end{theorem}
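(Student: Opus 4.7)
My plan is to partition the edges of the ``second-largest distance graph'' $G := \{\{p,q\} \subseteq X : |pq| = \Delta_2\}$ according to which convex layers contain their endpoints, and then bound each class. Write $\ell_i := |L_i|$ and let $e_1, e_2, e_3$ count the $G$-edges with both endpoints in $L_1$, one endpoint in each of $L_1$ and $L_2$, and both endpoints in $L_2$, respectively.

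The first step is to show that every edge of $G$ has both endpoints in $L_1 \cup L_2$. For $p \in X \setminus (L_1 \cup L_2)$, the point $p$ lies strictly inside $\conv(L_2) = \conv(X \setminus L_1)$. For any $q \in X \setminus \{p\}$, extend the ray from $q$ through $p$ to its exit point $r$ on some edge $[a,b]$ of $\conv(L_2)$ with $a,b \in L_2$. Since $p$ is interior, $|qr| > |qp|$, and by convexity $\max(|qa|, |qb|) \ge |qr|$. Because any diameter pair in $X$ has both endpoints on $\conv(X) = \conv(L_1)$ while $a, b \in L_2$, neither $|qa|$ nor $|qb|$ equals $\Delta$, so both are $\le \Delta_2$; hence $|qp| < \Delta_2$, contradicting $(p,q)\in E(G)$. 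A second application of the same ray-extension argument inside $\conv(L_2)$ shows that any $(p,q) \in e_3$ realizes $\Delta(L_2)$, the diameter of the set $L_2$; and since $\Delta(L_2)$ is a distance in $X$ strictly less than $\Delta$ (diameter pairs in $X$ live in $L_1$), we have $\Delta(L_2) \le \Delta_2$, while $\Delta(L_2) \ge |pq| = \Delta_2$. So $\Delta(L_2) = \Delta_2$ and Hopf--Pannwitz applied to $L_2$ gives $e_3 \le \ell_2$.

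It remains to control $e_1 + e_2$. For each edge $(p,q) \in e_1 \cup e_2$ with $p \in L_1$, extending the ray from $p$ through $q$ to an exit point on an edge $[c,d] \subset \conv(L_1)$ yields $\max(|pc|, |pd|) > \Delta_2$, so one of $c, d$ is a diameter partner of $p$ in $L_1$. This charges edges of $e_1 \cup e_2$ against diameter edges of $L_1$, of which there are at most $\ell_1$ by Hopf--Pannwitz applied to $L_1$. Combining this charging with (i) an angular argument bounding how many $L_2$-vertices can lie at distance exactly $\Delta_2$ from a fixed $p \in L_1$ inside a given wedge determined by $p$'s diameter partners, and (ii) the observation that whenever $e_1 \ge 1$ one has $\Delta_2(X) = \Delta_2(L_1)$ (since any $L_1$-$L_1$ distance below $\Delta(X) = \Delta(L_1)$ is at most $\Delta_2(L_1) \le \Delta_2(X)$), so the convex-position machinery underlying Theorem~\ref{thm:second} can be invoked directly on $L_1$, one obtains the three amortized bounds
\[
  |E(G)| \le \tfrac{3}{2}(\ell_1 + \ell_2),\qquad
  |E(G)| \le \tfrac{4}{3}\ell_1 + 2\ell_2,\qquad
  |E(G)| \le 2\ell_1 + \ell_2,
\]
each arising from a different way of balancing $L_1$-degree against $L_2$-degree in the $e_2$-count. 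The hypothesis is that at least one of the three right-hand sides is $\le n$, and the conclusion $|E(G)| \le n$ follows.

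The hard part is the third step: matching the coefficients $\tfrac{3}{2}$, $\tfrac{4}{3}$, and $2$ on the nose. While the charging to diameter pairs of $L_1$ gives a crude $O(\ell_1)$ bound on $e_1 + e_2$ almost for free, the three specific inequalities correspond to three distinct amortizations---an averaged-degree argument over $L_1 \cup L_2$ for the $\tfrac{3}{2}$ bound, a mixed count bounding $L_1$-side and $L_2$-side degrees simultaneously for the $\tfrac{4}{3}$ bound, and the simplest per-vertex bound at each $L_1$-vertex for $2\ell_1 + \ell_2$---each requiring a careful case analysis of how the $\Delta_2$-neighborhoods around $L_1$-vertices interact with the diameter graph on $L_1$. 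Driving all three of these bounds through in sharpest form, rather than losing constants, is where the bulk of the work lies.
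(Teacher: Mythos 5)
Your reduction to the first two convex layers is sound and matches the paper's starting point: the ray-extension argument correctly shows that no $\Delta_2$-edge is incident to $X\setminus(L_1\cup L_2)$, which is the content of the paper's observations (1) and (2) (quoted from Vesztergombi). After that, however, there are two genuine gaps. First, you settle for $e_3\leq \ell_2$, but in fact $e_3=0$: Vesztergombi's Proposition~1 states that every $\Delta_2$-pair has at least one endpoint in $L_1$, so $L_2$ spans no $\Delta_2$-edges at all. This is not a cosmetic loss --- your decomposition then needs $e_1+e_2\leq 2\ell_1$ to recover the bound $2\ell_1+\ell_2$, and $e_1+e_2\leq \tfrac32\ell_1+\tfrac12\ell_2$ to recover $\tfrac32(\ell_1+\ell_2)$, and the extra $\ell_2$ you conceded to $e_3$ consumes precisely the slack these constants require. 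Second, and more seriously, your charging of $e_2$-edges to diameter edges of $L_1$ is many-to-one (several $\Delta_2$-neighbors $q\in L_2$ of a fixed $p\in L_1$ can all exit $\conv(L_1)$ through edges meeting the same diameter partner of $p$), so by itself it bounds nothing; and the step where the three inequalities with coefficients $\tfrac32$, $\tfrac43$, $2$ are supposed to emerge is asserted rather than proved. The ``angular argument'' and the three ``amortizations'' are placeholders for what is in fact the entire content of the theorem.

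For comparison, the paper gets these constants from specific structural facts rather than a single charging scheme: the $\tfrac32(\ell_1+\ell_2)$ bound is immediate from Vesztergombi's theorem that the second-largest distance occurs at most $3n/2$ times in any planar $n$-set, applied to $L_1\cup L_2$ (your ``averaged-degree argument'' would amount to reproving that theorem). For the other two bounds, the paper first prunes vertices of degree at most $1$ from the $\Delta_2$-graph and then uses: every surviving $L_2$-vertex has degree exactly $2$; every surviving $L_1$-vertex has at most $4$ neighbors in $L_1$ and at most $2$ in $L_2$, with a trade-off (e.g.\ four $L_1$-neighbors force zero $L_2$-neighbors); and the $4n/3$ bound for the second-largest distance in convex position, applied to the surviving part of $L_1$. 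The two target bounds then fall out of $e(G')=e(L_1')+2|L_2'|$ combined with either $e(L_1')\leq\tfrac43|L_1'|$ or the degree-sum estimate $e(L_1')\leq 2|L_1'|-2|L_2'|$. Without identifying facts of this kind (or reproving them), your outline cannot be completed as written.
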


Theorem~\ref{thm:dense}
directly implies that, if the ratio $\Delta(X)/\delta(X)$ of a set $X\subseteq\RR^2$
is small enough, then the second largest distance in $X$ occurs at most $n$ times. 

\begin{corollary}
\label{cor:dense}
If $X\subseteq\RR^2$ is a set of $n\in\NN$ points with $\Delta(X)\leq\frac{n}{3\pi}\delta(X)$,
then $\mu(X,\Delta_2)\leq n$.
\end{corollary}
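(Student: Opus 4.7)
The plan is to deduce the corollary directly from Theorem~\ref{thm:dense} by verifying that the ratio hypothesis forces the first of the three expressions in the minimum, namely $\frac{3}{2}(|L_1|+|L_2|)$, to be at most $n$. The key geometric input is the classical planar inequality that for any convex body $K \subseteq \RR^2$ the perimeter satisfies $\per(K) \leq \pi \cdot \diam(K)$.

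First I would bound $|L_1|$. The points of $L_1$ are precisely the vertices of $\conv(X)$, so they lie in cyclic order on $\partial\conv(X)$. Each consecutive pair contributes a boundary segment of length at least $\delta(X)$, hence
\[ |L_1|\cdot\delta(X) \leq \per(\conv(X)) \leq \pi\cdot\diam(\conv(X)) = \pi\Delta(X). \]
Next I would bound $|L_2|$ by the same argument applied to $\conv(X \setminus L_1)$: its vertices are exactly $L_2$, its diameter is at most $\Delta(X)$, and consecutive vertices are at distance at least $\delta(X)$, so $|L_2|\cdot\delta(X) \leq \pi\Delta(X)$ as well.

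Adding the two bounds yields $|L_1|+|L_2| \leq 2\pi \Delta(X)/\delta(X)$. Using the assumption $\Delta(X) \leq \frac{n}{3\pi}\delta(X)$, we obtain
\[ \tfrac{3}{2}(|L_1|+|L_2|) \leq 3\pi\cdot \frac{\Delta(X)}{\delta(X)} \leq n. \]
Therefore the minimum in the hypothesis of Theorem~\ref{thm:dense} is at most $n$, and the theorem yields $\mu(X,\Delta_2) \leq n$.

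The only conceptual point to be careful about is the perimeter/diameter inequality and its applicability to $L_2$, but since $L_2$ is defined as the vertex set of a convex hull it is automatically in convex position with consecutive gaps at least $\delta(X)$, so no additional work is needed. The argument is otherwise a one-line substitution, which is why the constant $\frac{1}{3\pi}$ appears naturally: it is exactly what is required to absorb the factor $2\pi$ coming from the two layers and the factor $\frac{3}{2}$ from Theorem~\ref{thm:dense}.
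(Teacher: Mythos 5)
Your proof is correct and follows essentially the same route as the paper: bound $|L_1|$ and $|L_2|$ by the perimeters of their convex hulls, apply the perimeter-to-diameter inequality $\per \leq \pi\cdot\diam$, and feed $|L_1|+|L_2|\leq 2n/3$ into the first expression of the minimum in Theorem~\ref{thm:dense}. The only difference is cosmetic (the paper normalizes $\delta(X)=1$ first), so nothing further is needed.
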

\begin{proof}
Assume without loss of generality that $\delta(X)=1$, namely,
$\Delta(X)\leq\frac{n}{3\pi}$. Then, $|L_1|$ and $|L_2|$ are upper bounded by
the perimeter of the convex polygons formed by $L_1$ and $L_2$,
respectively. Since the perimeter of a convex polygon is at most $\pi$ times its
diameter, see~\cite[p.~76]{YB61}, we have $|L_1|+|L_2|\leq2n/3$ and thus
$\mu(X,\Delta_2)\leq n$ by Theorem~\ref{thm:dense}. 
\end{proof}

Note that the multiplicity of the second largest distance can be larger than $n$
in some planar point sets, see~\cite{Ve87,Ve96}. Thus, to fully resolve
Conjecture~\ref{conj:second}, one could perhaps consider multiplicities of different distances
simultaneously and show that one of them must be at most $n$. For instance, one could consider
the smallest and the second largest distance; however, as we demonstrate below,
their multiplicities can both be large. 

\begin{proposition} \label{thm:cons}
Let $m,n\in\NN$ with $m\leq\lfloor n/2 \rfloor$. There exists a planar point set
$X$ with $|X|=n$, such that $\mu(X,\Delta_2)\geq3m$ and $\mu(X,\delta)\geq 3n-5m+o(m)$. 
\end{proposition}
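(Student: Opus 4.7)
The plan is to construct $X$ explicitly as the union of two designed substructures. The first is a near-hexagonal patch $L$ of the triangular lattice of unit edge length, which accounts for the bulk of the pairs at the smallest distance $\delta=1$. The second is a collection of $m$ equilateral triangles $T_1,\ldots,T_m$ of a common side length $d>1$, which contribute $3m$ pairs at the distance $d$ that will serve as $\Delta_2$.

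Concretely, I would first place $L$ with approximately $n-3m$ points; a standard count shows that $L$ realizes $3(n-3m)-O(\sqrt{n-3m})$ pairs at unit distance. Next, I would choose $d$ slightly less than the diameter of $L$ and place the remaining $3m$ points as vertices of $m$ pairwise disjoint equilateral triangles of side $d$ adjacent to the boundary of $L$, arranged so that each triangle vertex lies at unit distance from at least one point of $L$. This contributes $3m$ pairs at distance $d$ and roughly $4m$ extra pairs at the unit distance coming from lattice--triangle adjacencies. Finally, I would shape $L$ so that $X$ has a uniquely attained diameter $\Delta>d$, guaranteeing that $d$ is indeed the second-largest distance of $X$.

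Combining the counts yields $\mu(X,\delta)\geq 3(n-3m)+4m-o(m)=3n-5m-o(m)$ and $\mu(X,\Delta_2)\geq 3m$, as required. The limiting regime $m$ close to $n/2$ is handled by shrinking $L$ accordingly; in that regime the lattice contribution is smaller but the interface gain and the triangle count still deliver the claimed bounds.

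The main obstacle will be to guarantee that $d$ is genuinely the second-largest distance in $X$: the value $d$ must be chosen to avoid coincidences with other distances occurring in the lattice, the global geometry must ensure that the diameter $\Delta$ is strictly greater than $d$ and attained by essentially one pair, and the lattice--triangle interface must be controlled to yield the required $\approx 4m$ additional unit distances without introducing spurious pairs at distance $d$. These verifications can be handled by a careful choice of the shape of the patch $L$, by selecting $d$ generically so as to avoid lattice coincidences, and by fixing the orientations of the auxiliary triangles so that each contributes the desired number of adjacencies.
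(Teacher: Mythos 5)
There is a genuine gap --- in fact two, and both are structural rather than matters of missing detail. First, your point budget does not cover the stated range of $m$. You spend $3m$ points on the disjoint triangles and $n-3m$ on the lattice patch $L$, so the construction is vacuous once $m>n/3$, whereas the proposition must hold for all $m\leq\lfloor n/2\rfloor$; ``shrinking $L$'' cannot repair a negative count. The paper's construction realizes $3m$ occurrences of $\Delta_2$ using only $2m$ points: a regular $m$-gon (whose second-longest diagonals give $m$ pairs at $\Delta_2$) together with $m$ interior points, each at distance $\Delta_2$ from two consecutive vertices of the $m$-gon (giving $2m$ more pairs). That factor of $3/2$ in pairs per point is exactly what makes the full range $m\leq\lfloor n/2\rfloor$ feasible, and your disjoint triangles (three points per three pairs) cannot match it.

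Second, the requirement that $d$ be the second-largest distance of $X$ is not achievable in your geometry. If $d$ is close to $\diam(L)$, an equilateral triangle of side $d$ has circumradius about $d/\sqrt{3}\approx 1.15\cdot(\diam(L)/2)$, so its three vertices cannot all lie within unit distance of $L$; if instead you take $d$ comparable to the side of the triangle inscribed in the circumcircle of $L$ (about $0.87\,\diam(L)$), then $L$ itself already contains many pairs at distances strictly between $d$ and $\diam(L)$, so $d$ is not $\Delta_2$. Moreover, vertices of two different triangles anchored around $L$ generically realize distances in the interval $(d,\,2d/\sqrt{3}\,]$, which again produces distances strictly between $d$ and the diameter. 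Finally, even granting the placement, your unit-distance count is short: ``each triangle vertex at unit distance from at least one point of $L$'' yields only $3m$ interface pairs, giving $3(n-3m)+3m=3n-6m$, not the claimed $3n-5m$; the extra $m$ pairs must come from somewhere, and in the paper they come from the $m$ consecutive pairs of the inner circle of points at distance $\delta$ from one another.
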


By letting $m=\lfloor3n/8\rfloor$, we obtain that $\min\{\mu(X,\Delta_2),\mu(X,\delta)\}\geq9n/8+o(n)$.
This also motivates the following problem.
\begin{problem} 
Determine 
\[ \limsup_{n \rightarrow \infty} \sup_{X\subseteq \mathbb{R}^2,\,|X|=n}\frac{ \min\{\mu(X,\Delta_2),\mu(X,\delta)\}}{n}. \]
\end{problem}

\subsection{Point sets with many large distance multiplicities}

Erd\H{o}s and Pach~\cite{EP90}, see also~\cite[Problem 11]{Er97}, asked the following question:
Given a set $X\subseteq\mathbb{R}^2$ of $n$ points, can it happen that
there are $c_1 n$ distances with multiplicities at least $c_2n$, for
some constant $c_1,c_2>0$? Bhowmick~\cite{Bh24} recently answered
their question in the positive: There exist arbitrary large planar
point sets $X$, $|X|=n$, such that there are $\lfloor n/4 \rfloor$
distances which occur at least $n+1$ times. Bhowmick~\cite{Bh24} also
considered higher multiplicities, distances that occur at least $n+m$
times, $m \geq 1$. He showed that there are sets with at least
$\lfloor \frac{n}{2(m+1)} \rfloor$ 
distances that occur at least $n+m$ times. Observe that for $m$ linear in $n$, this
lower bound is only $\Omega(1)$. Here we give a substantial
improvement by showing that there exist $X\subseteq\mathbb{R}^2$ of
$n$ points, such that at least $n^{c/\log\log{n}}$ distances have
superlinear multiplicity in $n$. For comparison purposes, note that
$n^{c/\log \log{n}} = \Omega((\log{n})^\alpha)$, for any fixed $\alpha>0$.  

\begin{theorem} \label{thm:many}
There exists some constant $c>0$ such that for sufficiently large $n\in\mathbb{N}$,
at least $n^{c/\log \log{n}}$ distances occur at least  $n^{1 + c/\log \log{n}}$ times
in the $\sqrt{n} \times \sqrt{n}$ grid. 
\end{theorem}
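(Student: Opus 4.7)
The plan is based on the identity
\[ \mu(X,d) \;=\; \tfrac{1}{2}\sum_{(a,b)\in\ZZ^2,\ a^2+b^2=d^2}(N-|a|)(N-|b|) \]
for the multiplicity of a distance $d$ in the grid $X=\{0,1,\ldots,N-1\}^2$ with $N=\sqrt n$, where the sum runs over integer displacement vectors of length $d$. The task thus reduces to exhibiting many integers $r\le N/2$ for which $r^2$ has many representations as an ordered signed sum of two squares: each such $r$ yields a distance of multiplicity at least $r_2(r^2)\cdot N^2/8$. The natural source is the multiplicative structure of the Gaussian integers: primes $\equiv 1\pmod 4$ split in $\ZZ[i]$, so a product of many such primes will have many factorizations through $\ZZ[i]$ and hence many sum-of-squares representations.

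Let $p_1<p_2<\cdots$ denote the primes $\equiv 1\pmod 4$, and set $k:=\lfloor c_0\log N/\log\log N\rfloor$ for a small constant $c_0>0$ to be chosen. By Dirichlet's theorem combined with the prime number theorem, $p_{2k}=\Theta(k\log k)=\Theta(\log N)$, and Mertens' theorem for arithmetic progressions yields $\sum_{j\le 2k}\log p_j \sim p_{2k}/2$, so $\prod_{j\le 2k}p_j\le N^{2c_0+o(1)}$. Taking $c_0<1/4$, every $k$-subset $S\subseteq\{p_1,\ldots,p_{2k}\}$ satisfies $r_S:=\prod_{p\in S}p\le N^{1/2}$, and distinct subsets give distinct $r_S$ by unique factorization. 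This produces $\binom{2k}{k}\ge 2^k=n^{\Omega(1/\log\log n)}$ candidate distances, each bounded by $N/2$.

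For each such $r_S$, the splitting $p=\pi_p\bar\pi_p$ in $\ZZ[i]$ gives $r_S^2=\prod_{p\in S}\pi_p^2\bar\pi_p^2$. The factorizations $r_S^2=\alpha\bar\alpha$ are in bijection, up to units, with the assignments of the exponent of $\pi_p$ in $\alpha$ from $\{0,1,2\}$ for each $p\in S$, yielding $3^k$ orbits. Accounting for the four Gaussian units gives $r_2(r_S^2)\ge 4\cdot 3^k$ integer pairs with $a^2+b^2=r_S^2$; since $|a|,|b|\le r_S\le N/2$ for large $N$, each contributes at least $N^2/4$ to the sum above, so
\[ \mu(X,r_S) \;\ge\; \tfrac{1}{2}\cdot 4\cdot 3^k\cdot\tfrac{N^2}{4} \;=\; \tfrac{3^k N^2}{2} \;\ge\; n^{\,1+\Omega(1/\log\log n)}. \]
Choosing $c_0$ small enough (possible because $\log 2$ and $\log 3$ are positive absolute constants) yields a common $c>0$ such that both the number of distances and each of their multiplicities are of the required form.

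The ingredients are all classical --- Mertens' theorem, unique factorization in $\ZZ[i]$, and Gaussian-integer counting of sum-of-squares representations --- so no single step is conceptually hard. The principal point requiring care is the balancing of $c_0$: too small, and one loses on the number of subsets, whereas too large, and the subset products exceed $N$ so not all $\binom{2k}{k}$ distances are admissible. The estimate $\prod_{j\le 2k}p_j\le N^{2c_0+o(1)}$ is just tight enough to accommodate the full range $c_0<1/4$, which in turn makes $2^k$ and $3^k$ simultaneously grow as $n^{\Theta(1/\log\log n)}$.
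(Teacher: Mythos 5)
Your proposal is correct and follows essentially the same route as the paper: Erd\H{o}s's classical number-theoretic argument in the $\sqrt{n}\times\sqrt{n}$ grid, generating many distances from subset products of primes $\equiv 1\pmod 4$ and many representations from their Gaussian-integer factorizations. Your variant --- taking integer distances $r_S$ with $r_2(r_S^2)=4\cdot 3^{k}$ for $k$-subsets $S$ of the first $2k$ such primes, so that $r_S\le N^{1/2+o(1)}$ and boundary effects in the grid are handled explicitly --- is a clean repackaging, but the underlying mechanism is the same as the paper's lemma giving $2^{k-1}$ values $n'$ with $r(n')\ge 2^{k/2}$.
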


As mentioned earlier, Bhowmick~\cite{Bh24} answered the question of Erd\H{o}s and Pach~\cite{EP90}
with constants $c_1=1/4$ and $c_2=1$. One may ask whether the constant $c_1=1/4$ resulting
from his construction is the best possible. We extend the above investigation for the range $c_2>1$.  
More precisely, we show that there exist $n$-element planar point sets with $m$ distances
so that $c_1 m$ distances occur at least $c_2 n$ times, for suitable constants $c_1>0$, $c_2>1$.
Proposition~\ref{thm:m/9} below gives three sample combinations; these combinations are not exhaustive. 

\begin{proposition} \label{thm:m/9}
For every $\eps>0$, there exists $n_0(\eps) \in \NN$ such that if $n \geq n_0(\eps)$,
then out of the $m = \Theta(n/\sqrt{\log{n}})$ distances presented in the $\sqrt{n} \times \sqrt{n}$ grid:
\begin{enumerate} \itemsep 2pt
\item[(i)] at least $(1-\eps) \, m/9$ distances occur at least  $16n/9$ times;
\item[(ii)] at least $(1-\eps) \, m/16$ distances occur at least  $9n/4$ times;
\item[(iii)] at least $(1-\eps) \, m/25$ distances occur at least  $64n/25$ times.
\end{enumerate}
\end{proposition}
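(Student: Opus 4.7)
Set $L=\sqrt{n}$ and let $G=\{0,1,\dots,L-1\}^2$. For each $k\in\{3,4,5\}$ the plan is to lower-bound the multiplicity of every distance that is realized by some distance-vector inside an $L/k\times L/k$ box, and then count how many such distinct distances there are; the three cases (i)--(iii) correspond to $k=3,4,5$ respectively.

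\emph{Step 1 (multiplicity lower bound).} For integers $a,b$ with $0<a,b<L$ and $a\ne b$, the eight vectors $(\pm a,\pm b)$ and $(\pm b,\pm a)$ are pairwise distinct and all have length $d=\sqrt{a^2+b^2}$. Each of them admits exactly $(L-a)(L-b)$ ordered placements inside $G$, so the number of ordered pairs of points in $G$ at distance $d$ is at least $8(L-a)(L-b)$; hence the multiplicity of $d$ in $G$ is at least $4(L-a)(L-b)$.

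\emph{Step 2 (counting distances in the subgrid).} Let $N=\lfloor L/k\rfloor$. By the classical consequence of Landau's theorem on sums of two squares, the number of distinct distances determined by $\{0,\dots,N\}^{2}$ is $(c+o(1))\,N^{2}/\sqrt{\log N}$ for a universal constant $c>0$; the same constant governs $G$, and since $\sqrt{\log(L/k)}=(1+o(1))\sqrt{\log L}$, the subgrid realizes $(1+o(1))\,m/k^{2}$ distinct distances. At most $2N=O(\sqrt{n})$ of these are realized \emph{only} by an axis-parallel vector $(a,0)$ or a diagonal vector $(a,a)$. Because $m/k^{2}=\Theta(n/\sqrt{\log n})$ grows faster than $\sqrt n$, for $n\ge n_{0}(\eps)$ at least $(1-\eps)\,m/k^{2}$ of them are realized by some vector $(a,b)$ with $0<a,b\le N$ and $a\ne b$.

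\emph{Step 3 (combining).} For each such distance, Step~1 yields multiplicity in $G$ at least
\[
4(L-a)(L-b)\ \ge\ 4(L-N)^{2}\ \ge\ 4L^{2}\bigl(1-1/k\bigr)^{2}\ =\ \frac{4(k-1)^{2}}{k^{2}}\,n,
\]
which equals $16n/9$, $9n/4$, $64n/25$ for $k=3,4,5$, proving (i), (ii), (iii).

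The only delicate point is the Landau--Ramanujan-type asymptotic in Step~2: one needs that the same leading constant $c$ governs both $G$ and its $L/k\times L/k$ subgrid, so that the subgrid inherits (up to $o(1)$) precisely a $1/k^{2}$ fraction of all distances; the $O(\sqrt n)$ axis/diagonal exceptions are then absorbed into the $\eps$-slack because $m/k^{2}=\Theta(n/\sqrt{\log n})$ dominates $\sqrt n$.
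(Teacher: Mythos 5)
Your proof is correct and follows essentially the same route as the paper: both restrict attention to distances realized by a vector fitting in an $(L/k)\times(L/k)$ box, count these as a $(1+o(1))/k^{2}$ fraction of all $m$ distances via the Erd\H{o}s--Landau asymptotic, and lower-bound each such distance's multiplicity by $4(L-L/k)^{2}=4n(k-1)^{2}/k^{2}$; your count via placements of the eight signed/swapped vectors is just cleaner bookkeeping of the paper's ``left degree'' summation over the $k^{2}$ subsections. A minor point in your favor: you also discard the $O(\sqrt{n})$ distances realized only by diagonal vectors $(a,a)$, for which the eight vectors collapse to four and the pointwise bound would otherwise fail, whereas the paper's proof explicitly discards only the axis-parallel ones.
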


\subsection{On the differences $a_k(X) - a_{k+1}(X)$}

Let $f(n)$ denote the maximum value of $a_1(X)$ over all $X\subseteq\mathbb{R}^2$ with $|X|=n$.
Erd\H{o}s~\cite[Section~3]{Er83} asked whether $f(n)-a_2(X)$ tends to infinity as $n\to\infty$.
However, this question is somewhat ambiguous since it leaves the ground set $X$
unspecified. Here we reformulate the question and ask for
\[ \max_{X\subseteq\mathbb{R}^2,\,|X|=n} (a_1(X) -a_2(X)). \]
We show that the maximum difference $a_1(X)-a_2(X)$ is at least $\Omega(n\log{n})$.
This is implied by the following more general result. 

\begin{theorem} \label{thm:diff}
Let $n\in\mathbb{N}$ be sufficiently large and $1\leq k \leq \log{n}$. There
exists a point set $X\subseteq\RR^2$ with $|X|=n$, such that
$a_k(X) - a_{k+1}(X) = \Omega\left(\frac{n}{k} \log{n}\right)$. Moreover, the
distances with the largest $k$ multiplicities can be prescribed.  
\end{theorem}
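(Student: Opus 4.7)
My plan is to construct $X$ as a disjoint union of $k$ ``booster'' configurations $G_1,\ldots,G_k$, placed at widely separated positions in the plane. Each $G_i$ is a scaled copy of an integer grid, designed so that the prescribed distance $d_i$ is realized by $\Omega((n/k)\log n)$ pairs, while every other distance within $G_i$ has strictly smaller multiplicity.

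Concretely, let $m:=\lfloor n/k\rfloor$ and let $r_2(r):=|\{(a,b)\in\ZZ^2:a^2+b^2=r\}|$. I would choose an integer $r^*$ in the range $[1,\Theta(m)]$ such that (i) $r_2(r^*)\ge C\log n$ for a sufficiently large absolute constant $C$, and (ii) every other squared integer distance $r$ realized in a $\lceil\sqrt m\rceil\times\lceil\sqrt m\rceil$ integer grid satisfies $r_2(r)\le r_2(r^*)-C\log n$. Such $r^*$ should exist for $n$ large by combining classical estimates on the maximum of $r_2$ (Ramanujan--Wigert type, giving $\max_{r\le m}r_2(r)=m^{\Omega(1/\log\log m)}\gg\log n$) with a judicious choice of $r^*$ as an analog of a superior highly composite number for the function $r_2$, so that the next-largest value of $r_2$ in the relevant range lies below by an additive gap of at least $C\log n$.

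Setting $\lambda_i=d_i/\sqrt{r^*}$ and letting $G_i$ be a translate of the $\lceil\sqrt m\rceil\times\lceil\sqrt m\rceil$ integer grid scaled by $\lambda_i$, a standard lattice-point count shows that $d_i$ occurs $\tfrac12 m\cdot r_2(r^*)(1+o(1))$ times in $G_i$, hence $\Omega((n/k)\log n)$; while every other distance in $G_i$ occurs at most $\tfrac12 m\cdot(r_2(r^*)-C\log n)(1+o(1))$ times. Placing the gadgets at positions drawn from a full-measure set of translations ensures that every inter-gadget pair distance is distinct from every other and from every intra-gadget distance, contributing only multiplicity $1$ to new distances. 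Hence the top $k$ multiplicities of $X$ are exactly those of $d_1,\ldots,d_k$, and $a_k(X)-a_{k+1}(X)\ge\tfrac12 mC\log n-o(m\log n)=\Omega((n/k)\log n)$, as required.

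The main obstacle is condition~(ii): finding $r^*$ whose $r_2$ value is separated from the next-largest $r_2$ in the same range by an additive gap of $\Omega(\log n)$. The naive candidate $r^*=p_1p_2\cdots p_t$ (the product of the first $t$ primes $\equiv 1\pmod 4$) attains only $r_2(r^*)=4\cdot 2^t$, and certain integers with repeated small prime factors (such as $p_1^2p_2^2p_3\cdots p_{t-1}$) can in fact achieve strictly larger $r_2$; so the correct $r^*$ must be chosen from a restricted class of optimally structured integers, analogous to superior highly composite numbers. Verifying the quantitative $\Omega(\log n)$ gap from $r_2(r^*)$ to the next-largest value of $r_2$ in the grid's range is the delicate number-theoretic heart of the construction; once this is established, the remaining lattice-point count and the generic placement of gadgets are routine.
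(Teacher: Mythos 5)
Your construction hinges on a number-theoretic claim that you explicitly leave unproven, and which is in fact very doubtful: the existence of an integer $r^*$ in the relevant range whose representation count $r_2(r^*)$ exceeds $r_2(r)$ for \emph{every other} squared distance $r$ realized in the grid by an additive gap of $\Omega(\log n)$. The top of the distribution of $r_2$ is typically crowded with ties or near-ties. For instance, if $r^*$ maximizes $r_2$ on $[1,R]$ and $2r^*\leq R$, then $r_2(2r^*)=r_2(r^*)$ exactly; and if some prime $p'\equiv 1\pmod 4$ smaller than the largest prime factor $p$ of $r^*$ does not divide $r^*$, then replacing $p$ by $p'$ produces a smaller integer with the identical $r_2$-value, again an exact tie. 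Ruling out all such competitors, or even just bounding the second-largest value away from the maximum by an additive $C\log n$, is not a ``delicate heart'' still to be filled in --- it is the entire difficulty, and there is no evidence it can be done. A second unaddressed problem: since the gadgets $G_i$ are scalings of the \emph{same} grid by factors $d_i/\sqrt{r^*}$ with the $d_i$ prescribed and arbitrary, intra-gadget distances from different gadgets can coincide (whenever $d_i^2/d_j^2$ is a ratio of representable integers, which happens massively if, say, $d_i/d_j$ is rational). A spurious distance could then accumulate multiplicity from up to $k$ gadgets and overtake $a_k(X)$; your condition (ii), even if it held, gives only a single-gadget bound and does not prevent this.

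For contrast, the paper avoids number theory entirely: starting from one point, it repeatedly doubles the set by taking the union with a translate at distance $d_i$ in a generic direction, cycling through $d_1,\dots,d_k$. Each generic doubling adds $n/2$ new occurrences of the current $d_i$ and no repeated occurrences of any other distance, so the recurrence $T_i(n)=2^kT_i(n/2^k)+n/2$ yields multiplicity $\frac{n}{2k}\log n$ for each prescribed $d_i$, while all other distances occur at most $n$ times. If you want to salvage a grid-based approach, you would need at minimum a robust separation result for $r_2$ near its maximum, which is a much harder problem than the theorem itself.
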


In particular, $a_k(X) - a_{k+1}(X)$ can be superlinear in $n$ for $k \to \infty$.

\begin{corollary}
$\max_{X\subseteq\mathbb{R}^2,\,|X|=n} (a_1(X) -a_2(X))=\Omega(n\log{n})$.
\end{corollary}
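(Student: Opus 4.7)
The plan is simply to invoke Theorem~\ref{thm:diff} with $k=1$, which is a legitimate choice since the hypothesis $1\leq k\leq \log n$ is satisfied. That theorem then produces, for every sufficiently large $n$, a point set $X\subseteq\RR^2$ with $|X|=n$ such that
\[ a_1(X)-a_2(X) \;=\; \Omega\!\left(\frac{n}{1}\log n\right) \;=\; \Omega(n\log n), \]
which is exactly the lower bound claimed by the corollary.

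The only thing to double-check is that the ``sufficiently large $n$'' requirement in Theorem~\ref{thm:diff} does not cause trouble for the $\Omega$ statement in the corollary. Since the corollary asserts only an asymptotic lower bound as $n\to\infty$, it suffices to provide, for all large enough $n$, a single witness set $X_n$ achieving the bound; this is exactly what the $k=1$ instance of Theorem~\ref{thm:diff} supplies. Small values of $n$ can be absorbed into the implicit constant in $\Omega(\cdot)$, since the difference $a_1(X)-a_2(X)$ is a nonnegative integer (and can be trivially achieved by, say, an equilateral triangle plus isolated points).

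No separate argument or additional obstacle is involved here; the corollary is a formal specialization of the more general theorem. The genuine work lies in Theorem~\ref{thm:diff} itself, in particular in constructing a planar configuration on $n$ points whose top multiplicity exceeds the second-highest by $\Omega(n\log n)$, together with the ability to prescribe which distance realizes the maximum multiplicity. Once that theorem is in hand, the corollary follows by substitution.
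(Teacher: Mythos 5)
Your proposal is correct and matches the paper exactly: the corollary is stated as an immediate consequence of Theorem~\ref{thm:diff} with $k=1$, and the paper gives no further argument beyond this specialization. Your remark about absorbing small $n$ into the implicit constant is a fine (if unnecessary) bit of care.
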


Theorem~\ref{thm:many} suggests that the following stronger lower bound might be true:

\begin{problem} \label{problem:a1-a2}
Does there exist a constant $c>0$ such that for sufficiently large $n\in\mathbb{N}$, we have
  \[ \max_{X\subseteq\mathbb{R}^2,\,|X|=n} (a_1(X) -a_2(X)) \geq n^{1 + c/\log \log{n}} ? \] 
\end{problem}

\subsection{Related work}

Recall that $f(n)$ denotes the largest possible value of $a_1(X)$ among
all subsets $X\subseteq\RR^2$ of $n$ points. Determining $f(n)$, also known as
the unit distance problem, is notoriously difficult. The current best upper
bound is $O(n^{4/3})$ established by Spencer, Szemer\'edi, and Trotter~\cite{SST84}.
A simple and elegant argument based on crossing numbers is due to Sz\'ekely~\cite{Sz97}. From the other direction,
it is conjectured by Erd\H{o}s that a $\sqrt{n} \times \sqrt{n}$ section of the integer lattice
gives the correct order magnitude, $n^{1 + c/\log \log{n}}$, and so the current best 
upper bound seems far off. See also a recent survey by Szemer\'edi~\cite{Sz16} for more on this topic.

Let $A(n)$ be the maximum value of $\sum a_k(X)^2$ over all $X\subseteq\RR^2$
with $|X|=n$. Erd\H{o}s~\cite{Er90} asked whether $A(n)=O(n^3 (\log{n})^\alpha)$
holds for some positive constant $\alpha>0$. This question received a complete
answer via the work of Guth and Katz~\cite{GK15} on the problem of distinct
distances. Specifically, the authors proved that the inequality holds with 
$\alpha=1$, \ie, $A(n)= O(n^3 \log{n})$, and is tight in the $\sqrt{n} \times \sqrt{n}$ integer grid.
Lefmann and Thiele~\cite{LT95} proved that the sharper inequality $A(n)=O(n^3)$ holds for convex point sets;
this bound is tight, \eg, for a regular $n$-gon.

The rest of the paper is organized as follows. We will prove
Theorems~\ref{thm:second},~\ref{thm:dense} and Proposition~\ref{thm:cons} in Section~\ref{sec:second}.
In Section~\ref{sec:many} we will prove Theorem~\ref{thm:many} and Proposition~\ref{thm:m/9}.
Section~\ref{sec:diff} is devoted to proving Theorem~\ref{thm:diff}.
Finally, in Section~\ref{sec:distinct} we give a simple answer to Question~\eqref{q4}.

\section{A second multiplicity at most $n$ in planar point sets} \label{sec:second}

Given a finite point set $X\subseteq\RR^2$, recall that $\delta(X)<\Delta_2(X)<\Delta(X)$
denote the smallest, the second largest, and the largest distances in $X$, respectively
(assuming they exist and are different).

\subsection{The convex case}

\begin{proof}[Proof of Theorem~\ref{thm:second}]
Let $X\subseteq\RR^2$ be an arbitrary convex set of $n$ points. Let
$R_n$ denote a regular $n$-gon and $R_n^-$ denote a regular $n$-gon
minus one vertex.  
A classical result of Altman~\cite{Alt63} states that
$X$ determines at least $\lfloor n/2 \rfloor$ distinct distances; and this bound is attained by $R_n$.
Moreover, Altman proved that if $n$ is odd and $X$ determines exactly $\lfloor n/2 \rfloor$ distances,
then $X = R_n$; in particular, $a(X)= (n,n,\ldots,n)$. See also~\cite{EF95a,Fi96}. 

The complementary result for even $n$ is due to Fishburn~\cite{Fi95}.
Suppose that $n \geq 6$ is even and $X$~determines exactly $\lfloor n/2 \rfloor$ distances. Then
\begin{enumerate} \itemsep 2pt
    \item[(i)] for $n=6$, there exist exactly two possibilities, $a(X)=(6,6,3)$, or $a(X)=(5,5,5)$;
    \item[(ii)] for $n \geq 8$, either $X =R_n$ or $X=R_{n+1}^-$.
\end{enumerate}
In particular, for the second case we have $a(X)= (n,n,\ldots,n,n/2)$ or $a(X)= (n-1,n-1,\ldots,n-1)$.

We can now finalize the proof. If $X$ determines strictly more than $\lfloor n/2 \rfloor$
distinct distances and all distances smaller than $\Delta$ occur more than $n$ times,
then the number of point pairs is at least
\[ \left\lfloor \frac{n}{2} \right\rfloor (n+1) + 1 > {n \choose 2}, \]
a contradiction. Otherwise, $X$ determines exactly $\lfloor n/2 \rfloor$ distinct distances,
and it is easy to check that all possible cases listed previously satisfy the requirements.
\end{proof}

\subsection{The ``not too convex'' case} \label{constructions}

\begin{proof}[Proof of Theorem~\ref{thm:dense}]
Let $L_1=L_1(X)$ and $L_2=L_2(X)$ be the first and second convex layers of $X$. It suffices to show that
\[\mu(X,\Delta_2)\leq\min\left\{\frac{3}{2}(|L_1|+|L_2|),\,\frac{4}{3}|L_1|+2|L_2|,\,2|L_1|+|L_2|\right\}.\]

We have the following observations:

\begin{itemize} \itemsep 2pt
\item[(1)]\cite[Proposition~1]{Ve96} Let $p,q\in X$. If $\dist(p,q)=\Delta$ then $\{p,q\}\subseteq L_1$.
  If $\dist(p,q)=\Delta_2$ then $\{p,q\}\cap L_1 \neq\emptyset$.
\item[(2)] If $\dist(p,q)=\Delta_2$ and $p \in L_1$, then $q \in L_1 \cup L_2$. Indeed,
  the points of $X$ in the exterior of the circle of radius $\Delta_2$ centered at $p$, if any, have distance $\Delta$
  from $p$ and by observation (1) are in $L_1$. Hence, $q$ is at least in the second convex layer of $X$. 
\end{itemize}
Combining observations (1) and (2) we have that $\mu(X,\Delta_2)=\mu(L_1\cup L_2,\Delta_2)$,
moreover, $\Delta_2$ is still the second largest distance in $L_1\cup
L_2$. Vesztergombi~\cite{Ve87} showed that the multiplicity of the second
largest distance among any $n$ points in the plane is at most $3n/2$. Namely, we have
\[ \mu(X,\Delta_2)\leq\frac{3}{2}(|L_1|+|L_2|). \]

We proceed to prove $\mu(X,\Delta_2)\leq\min\left\{\frac{4}{3}|L_1|+2|L_2|,\,2|L_1|+|L_2|\right\}$.
Let $G$ be a graph on $L_1\dot\cup L_2$, where $pq$ is an edge if and only if
$\dist(p,q)=\Delta_2$. Then it holds that $\mu(X,\Delta_2)=e(G)$. Iteratively
remove vertices of degree less than $2$ from $G$, and let $G'$ be the remaining
graph whose vertex set is $L_1'\dot\cup L_2'$ with $L_1'\subseteq L_1$ and
$L_2'\subseteq L_2$. Then we have
\[e(G)\leq |L_1\setminus L_1'| + |L_2\setminus L_2'| + e(G').\]
To bound $e(G')$, we record several observations about the graph $G'$ by Vesztergombi~\cite{Ve96}: 

\begin{itemize} \itemsep 2pt
\item[(3)] $L_2'$ is an independent set (follows from (1) and (2)).
\item[(4)]\cite[Proposition~3]{Ve96} Every $q\in L_2'$ has degree exactly $2$.
\item[(5)]\cite[Proposition~5]{Ve96} Every $p\in L_1'$ has at most $2$ neighbors in $L_2'$.
\item[(6)]\cite[Proposition~6]{Ve96} If $p\in L_1'$ has $3$ neighbors in $L_1'$, then it has at most $1$ neighbor in $L_2'$.
\item[(7)]\cite[Proposition~7]{Ve96} If $p\in L_1'$ has $4$ neighbors in $L_1'$, then it has no neighbor in $L_2'$.  
\item[(8)]\cite[Proposition~8]{Ve96} Every $p\in L_1'$ has at most $4$ neighbors in $L_1'$.
\end{itemize}
Let $e(L_1')$ denote the number of edges in $L_1'$, and let $e(L_1',L_2')$
denote the number of edges between $L_1'$ and $L_2'$. Due to observations (3)--(5),
\[e(G')=e(L_1')+e(L_1',L_2')= e(L_1')+\,2|L_2'|.\]
Vesztergombi~\cite{Ve85} showed that the multiplicity of the second largest
distance among any $n$ points in convex position in the plane is at most
$4n/3$. Since $L_1'$ is convex and $\Delta_2$ is either the largest or second
largest distance in $L_1'$,  
$$e(L_1')=\mu(L_1',\Delta_2)\leq\frac{4}{3}|L_1'|.$$
On the other hand, let $\deg(p)$, $\deg_1(p)$, and $\deg_2(p)$ denote the number
of neighbors of $p$ in $G'$, $L_1'$, and $L_2'$, respectively. For each $p\in L_1'$,
by observations (4)--(8) we have $\deg(p)=\deg_1(p)+\deg_2(p)\leq4$. Therefore, 
\begin{align*}
e(L_1')=\frac{1}{2}\sum_{p\in L_1'}\deg_1(p)\leq \frac{1}{2}\sum_{p\in
  L_1'}\left( 4-\deg_2(p)\right)&=2|L_1'|-\frac{1}{2}e(L_1',L_2')\\ 
&=2|L_1'|-2|L_2'|.
\end{align*}
We conclude
\begin{align*}
e(G)&\leq|L_1\setminus L_1'|+|L_2\setminus L_2'|+e(G')=|L_1\setminus L_1'|+|L_2\setminus L_2'|+e(L_1')+e(L_1',L_2')\\
&\leq |L_1\setminus L_1'|+|L_2\setminus L_2'|+\min\left\{\frac{4}{3}|L_1'|,\,2|L_1'|-|L_2'|\right\}+2|L_2'| \\
&= |L_1\setminus L_1'|+|L_2\setminus L_2'|+\min\left\{\frac{4}{3}|L_1'|+2|L_2'|,\,2|L_1'|+|L_2'|\right\}\\
&\leq \min\left\{\frac{4}{3}|L_1|+2|L_2|,\,2|L_1|+|L_2|\right\}. \qedhere
\end{align*}
\end{proof}

\subsection{The case where the multiplicities of $\Delta_2$ and $\delta$ are both large}

\begin{proof}[Proof of Proposition~\ref{thm:cons}]
  Our construction is inspired by that of Vesztergombi~\cite{Ve87,Ve96}, see also~\cite[Chap.~5.8]{BMP05}.
  Let $m_1=m_2=m$ and $m_3=n-2m$. The construction comprises three groups,
  each containing $m_1,m_2$ and $m_3$ points, respectively. Note that $n=m_1+m_2+m_3$.   

\smallskip
\textbf{Group I:} Place the first $m_1$ points $v_1,\ldots, v_{m_1}$ as the vertices of a regular $m_1$-gon inscribed
in a circle $C$ of radius $n$. Let $\Delta$ and $\Delta_2$ be the largest and second largest distances in this $m_1$-gon.
Note that $\Delta=O(n)$ and the number of occurrences of $\Delta_2$ in Group I is exactly $m_1$.

\smallskip
\textbf{Group II:} The next $m_2$ points $u_1,\ldots, u_{m_2}$ are positioned inside the circle $C$ such that
$\dist(v_i, u_i) = \dist(u_i, v_{i+1}) = \Delta_2$, where the indices are modulo $m_2$. Notably,
the $u_i$'s lie on a circle. Let $\delta=\Theta(1)$ be the smallest distance in the construction so far,
representing the distance between consecutive $u_i$'s. This distance appears $m_2$ times in Group II.

\smallskip
\textbf{Group III:} The final $m_3$ points form an equilateral triangular lattice with mesh width $\delta$ contained 
in a disk of radius $\Theta(\sqrt{n})$. The lattice is centered at the origin, coinciding with the center
of the circle $C$. Note that the distance $\delta$ appears $3m_3+o(n)$ times inside Group III. 

\smallskip
Let $X$ denote the final construction, see Figure~\ref{fig:construction1} for an illustration.
Since $L_1(X)$ and $L_2(X)$ correspond to the points in Group I and Group II, respectively,
we have $|L_1|=|L_2|=m$. Overall, the distance $\delta$ appears $3m_3+m_2+o(n)=3n-5m+o(n)$ times,
and the distance $\Delta_2$ appears $m_1+2m_2=3m$ times.
We complete the proof by noting that $\delta(X)=\delta$ and $\Delta_2(X)=\Delta_2$. 
\end{proof}

\begin{figure}[htbp]
  
    \centering
    \resizebox{0.75\textwidth}{!}{ %
    \begin{tikzpicture}
        \def\n{7}  
        \def\R{2}  
        \def\r{1.0}  
        \def\squareSize{0.2} 

        \foreach \i in {1,...,\n} {
            \pgfmathsetmacro{\angle}{360*\i/\n}
            \coordinate (v\i) at (\angle:\R);
        }

        \foreach \i in {1,...,\n} {
            \pgfmathtruncatemacro{\next}{mod(\i,\n)+1}  
            \draw[thick] (v\i) -- (v\next);
        }

        \foreach \i in {1,...,\n} {
            \pgfmathsetmacro{\angle}{360*\i/\n}  
            \fill (v\i) circle (2pt);
            \node at (\angle:\R + 0.4) {$v_{\i}$};  
        }

        \foreach \i in {1,...,\n} {
            \pgfmathsetmacro{\angle}{360*\i/\n}
            \coordinate (u\i) at (\angle:\r);
            \fill (u\i) circle (2pt);
        }

        \foreach \i in {1,...,\n} {
            \pgfmathtruncatemacro{\newIndex}{mod(\i+2,\n)+1}  
            \node at (u\i) [shift={(0.2,-0.2)}] {$u_{\newIndex}$};  
        }

        \foreach \i in {1,...,\n} {
            \pgfmathtruncatemacro{\nextTwo}{mod(\i+1,\n)+1}
            \pgfmathtruncatemacro{\nextThree}{mod(\i+1+1+1,\n)+1}
            
            \draw[dotted, thick] (v\i) -- (v\nextTwo);

            \draw[dotted, thick] (u\i) -- (v\nextThree);

            \pgfmathtruncatemacro{\nextV}{mod(\i+1+1,\n)+1}  
            \draw[dotted, thick] (u\i) -- (v\nextV);
        }

        \draw[thick] 
            (-\squareSize, -\squareSize) -- (\squareSize, -\squareSize) -- 
            (\squareSize, \squareSize) -- (-\squareSize, \squareSize) -- cycle;

        \def\s{3.5} 
        \def\cx{6} 
        \def\cy{0} 

        \draw[thick] 
    (\cx-\s/2, \cy-\s/2) -- (\cx+\s/2, \cy-\s/2) -- 
    (\cx+\s/2, \cy+\s/2) -- (\cx-\s/2, \cy+\s/2) -- cycle;

    \def\rows{5}
    \def\cols{5}

    \def\spacing{ \s/(\cols-1) }

    \draw[->, thick, line width=0.5mm, >=latex] (3.9,0.4) -- (0,0);

    \foreach \j in {0,1,2,3,4} {
        \ifnum\j=1
            \foreach \i in {0,1,2,3} {
                \pgfmathsetmacro{\x}{\cx - \s/2 + \i * \spacing + \spacing/2} 
                \pgfmathsetmacro{\y}{\cy - \s/2 + \j * \spacing}
                \fill (\x, \y) circle (2pt);  
            }
        \else
        \ifnum\j=3
            \foreach \i in {0,1,2,3} {
                \pgfmathsetmacro{\x}{\cx - \s/2 + \i * \spacing + \spacing/2} 
                \pgfmathsetmacro{\y}{\cy - \s/2 + \j * \spacing}
                \fill (\x, \y) circle (2pt);  
            }
        \else
            \foreach \i in {0,1,2,3,4} {
                \pgfmathsetmacro{\x}{\cx - \s/2 + \i * \spacing}
                \pgfmathsetmacro{\y}{\cy - \s/2 + \j * \spacing}
                \fill (\x, \y) circle (2pt);  
            }
        \fi
        \fi
    }

    \end{tikzpicture}}\caption{The construction in Proposition~\ref{thm:cons}, when $m=7$.
      Dotted edges represent the second largest distance $\Delta_2$.}
    \label{fig:construction1}

\end{figure}
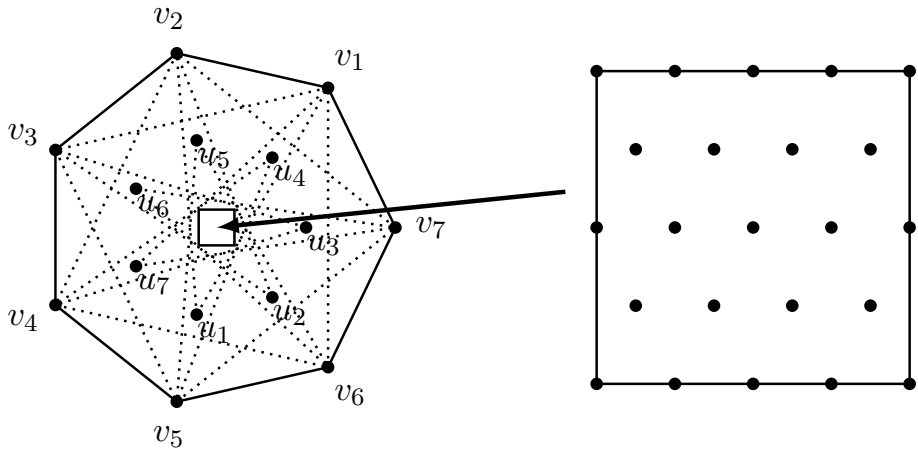

\section{Many large distance multiplicities among planar points}
\label{sec:many}

\subsection{Proof of Theorem~\ref{thm:many}}

For $n\in\NN$, let $[n]$ denote the set $\{1,2\ldots,n\}$. We first prove the following lemma by adapting
a well-known argument for counting representations of a natural number as the sum of two squares;
see, \eg, \cite[Chap.~3]{PA95} and~\cite[Chap.~2]{Gr85}.

\begin{lemma} \label{lem:many}
Let $r(n)$ denote the number of distinct ways in which $n \in \NN$ can be represented as
the sum of two squares. Then there exists a constant $c>0$ such that for infinitely many $n\in\NN$,
at least $n^{c/\log \log{n}}$ distinct elements $n' \in [n]$ have
\[ r(n') \geq n^{c/\log \log{n}}. \]
\end{lemma}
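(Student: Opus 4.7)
The plan is to exploit Jacobi's two-square theorem: if $n = 2^{a_0}\prod_{p\equiv 1\,(4)}p^{a_p}\prod_{q\equiv 3\,(4)}q^{b_q}$, then $r(n)=4\prod_{p\equiv 1\,(4)}(a_p+1)$ provided every $b_q$ is even, and $r(n)=0$ otherwise. In particular, if $n'$ is a squarefree product of $k$ distinct primes $p_i\equiv 1\pmod 4$, then $r(n')=4\cdot 2^k$. This exponential dependence on the number of prime factors $\equiv 1\pmod 4$ is the lever that drives the lemma.

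Concretely, I would let $P_1<P_2<\cdots$ enumerate the primes $\equiv 1\pmod 4$ and set $N_k:=\prod_{i=1}^k P_i$. I will prove the lemma for $n=N_k$, which gives infinitely many witnesses. The central size estimate needed is
\[\log N_k \;=\; \Theta(k\log k),\]
which follows from the prime number theorem in the arithmetic progression $1\pmod 4$: it gives $P_k=\Theta(k\log k)$ together with the Chebyshev/Mertens-type bound $\sum_{i\leq k}\log P_i=\theta(P_k;4,1)=\Theta(P_k)$. Inverting this relation yields $k=\Theta(\log N_k/\log\log N_k)$.

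Next, I would take all subsets $S\subseteq[k]$ of size $\lfloor k/2\rfloor$, and define $n_S:=\prod_{i\in S}P_i\leq N_k$. Distinct subsets give distinct values of $n_S$ by unique factorization, so we obtain $\binom{k}{\lfloor k/2\rfloor}\geq 2^k/(k+1)$ distinct elements of $[N_k]$. Each $n_S$ is squarefree and supported on primes $\equiv 1\pmod 4$, so the two-square formula gives $r(n_S)=4\cdot 2^{\lfloor k/2\rfloor}$. Hence, for $k$ sufficiently large, both the number of good $n'\leq N_k$ and the common lower bound on $r(n')$ are at least $2^{k/2}$. Using the size estimate, $2^{k/2}=N_k^{\Theta(1/\log\log N_k)}$, so any constant $c<(\log 2)/2$ makes the conclusion hold for all large $k$, and letting $k\to\infty$ gives infinitely many $n$.

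The main potential obstacle is purely analytic-number-theoretic: verifying $\log N_k=\Theta(k\log k)$ rigorously. This amounts to standard Chebyshev/Mertens estimates in the progression $1\pmod 4$ and presents no real difficulty, but it is the only nontrivial input. The combinatorial heart of the argument is just the observation that the middle layer of the Boolean lattice on $k$ primes produces exponentially many squarefree integers, each inheriting the same exponentially large value of $r$, and then matching these two exponentials against the scale $\log N_k\asymp k\log k$.
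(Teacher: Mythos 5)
Your proposal is correct and follows essentially the same route as the paper: take the product of the first $k$ primes congruent to $1 \pmod 4$, use $p_k = \Theta(k\log k)$ to get $k = \Theta(\log n/\log\log n)$, and let exponentially many subsets of these primes produce exponentially many distinct $n' \le n$, each with exponentially many two-square representations by multiplicativity. The only cosmetic differences are that you restrict to subsets of size exactly $\lfloor k/2\rfloor$ and invoke Jacobi's formula, whereas the paper uses all subsets of size at least $k/2$ and argues directly via unique factorization of Gaussian integers.
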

\begin{proof}
  Let $n=p_1 p_2 \cdots p_k$, where $p_j$ is the $j^{\mathrm{th}}$ smallest prime of the form $4m+1$.
  Since $p_k$ satisfies
  \[ c_1 k \log{k} \leq p_k \leq c_2 k \log{k}, \]
  for suitable constants $c_1,c_2>0$, this implies $k \geq  2c \log{n} / \log \log{n}$ for
  a suitable constant $c>0$. 
It is well-known that any such prime can be represented (uniquely) as the sum of two squares,
\ie,
\[ p_j = a_j^2 + b_j^2 =(a_j + b_j \I)(a_j-b_j \I), \]
where $\I=\sqrt{-1}$. 
There are $2^k$ subsets of $K=\{1,2,\ldots,k\}$, and out of these, exactly $2^{k-1}$
subsets have cardinality at least $k/2$. 

Fix any subset $K' \subseteq K$ of cardinality $|K'| \geq k/2$.
Let $n'= \prod_{j \in K'} p_j$. For each subset $J \subseteq K'$, 
\begin{align*}
\prod_{j \in J} (a_j + b_j \I) \prod_{j \in K' \setminus J} (a_j - b_j \I) &= A_J + B_J \I, \\ 
\prod_{j \in J} (a_j - b_j \I) \prod_{j \in K' \setminus J} (a_j + b_j \I) &= A_J - B_J \I,
\end{align*}
where $A_J$ and $B_J$ satisfy
\[ A_J^2 + B_J^2 = (A_J + B_J \I)(A_J - B_J \I)= \prod_{j \in K'} p_j =n' \leq n. \]
By the unique factorization theorem for complex integers, $A_J + B_J \I$ is different for
different choices of $J$, so we obtain
\[ r(n') \geq 2^{k/2} \geq n^{c/\log \log{n}}.\]

Since there are  $2^{k-1} \geq n^{c/\log \log{n}}$ distinct values $n' \in [n]$
that have been considered, the lemma is implied.
\end{proof}

\begin{proof}[Proof of Theorem~\ref{thm:many}]
  The proof follows a (now standard) argument of Erd\H{o}s~\cite{Er46} using
  the estimate in Lemma~\ref{lem:many}. Let $n_0\leq n$ be the largest integer
  such that $n_0=p_1p_2\dots p_k$, where $p_j$ is the $j^{\text{th}}$
  smallest prime of the form $4m+1$. Since
  $k=\Theta(\log{n} / \log\log{n})$, we
  have \[p_{k+1}=\Theta((k+1)\log(k+1))=\Theta(\log{n}),\] 
  namely, $n_{0}=\Omega(n/\log{n})$. By Lemma~\ref{lem:many} there
  exist $n_{0}^{\Omega(1/\log\log{n_0})}=n^{\Omega(1/\log\log{n})}$
  different values of $n'\in[n_0]$ that can be represented as the sum
  of two squares in $n^{\Omega(1/\log\log{n})}$ ways. For every such
  value of $n'$ there are $\Omega(n)$ points in the
  $\sqrt{n}\times\sqrt{n}$ grid, each of which has
  $n^{\Omega(1/\log\log{n})}$ neighbors at distance $\sqrt{n'}$. This
  completes the proof. 
\end{proof}

\subsection{Proof of Proposition~\ref{thm:m/9}}

\begin{proof}
  We prove the second estimate; the proofs of the other two estimates are analogous.
  Let $X$ be a $\sqrt{n} \times \sqrt{n}$ section of the integer grid, where $n=16k^2$.
  Then $X$ determines
  \[ m=(1 \pm o(1)) \frac{cn}{\sqrt{\log{n}}} \]
  distinct distances, for some $c>0$; see~\cite{Er46} or~\cite[Chap.~12]{PA95}.
  $X$ consists of $16$ smaller $k \times k$ sections $Y$,  each determining 
  \[ (1 \pm o(1)) \frac{cn}{16\sqrt{\log{(n/16)}}} =(1 \pm o(1)) \frac{m}{16} \]
  distinct distances. See Figure~\ref{fig:grid}. 

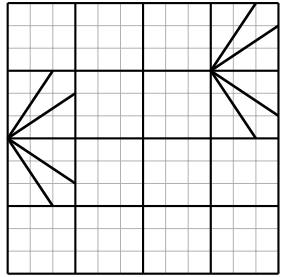
\begin{figure}[htbp]
    \centering
\begin{tikzpicture}[scale=0.3]

  \draw[step=1cm,very thin,gray!60] (0,0) grid (12,12);

  \foreach \x in {0,3,...,12} {
    \draw[thick] (\x,0) -- (\x,12);
  }
  \foreach \y in {0,3,...,12} {
    \draw[thick] (0,\y) -- (12,\y);
  }

  \draw[line width=1pt] (0,6) -- (3,8);
  \draw[line width=1pt] (0,6) -- (2,9);
  \draw[line width=1pt] (0,6) -- (3,4);
  \draw[line width=1pt] (0,6) -- (2,3);
  \draw[line width=1pt] (9,9) -- (12,11);
  \draw[line width=1pt] (9,9) -- (11,12);
  \draw[line width=1pt] (9,9) -- (12,7);
  \draw[line width=1pt] (9,9) -- (11,6);

\end{tikzpicture}
\caption{Multiplicities of distances in the grid.}
\label{fig:grid}
\end{figure}

Take any distance determined by a non-vertical and non-horizontal
segment $s=ab$ in $Y$, where $a$ and $b$ are its left and 
right endpoint, respectively. Observe that $s$ occurs at least
\[ 2k \cdot 3k \cdot4 + 2k \cdot 3k \cdot 2 = 36k^2 = \frac{9}{4} n \]
times in $X$. Indeed, the left degree of every point in the $6$ central left smaller sections 
is at least $4$ whereas the left degree of every point in the remaining $6$ left smaller sections 
is at least $2$. Note that the number of distances determined by a
vertical or horizontal segment in $Y$ is at most $k=o(m)$. This
justifies the second estimate.  

The first and the third estimate are obtained analogously by subdividing $X$ into $9$
and $25$ smaller sections, respectively.
\end{proof}

\section{On the differences $a_k(X) - a_{k+1}(X)$} \label{sec:diff}

Using an inductive construction, Erd\H{o}s and Purdy~\cite{EP76}
showed that the maximum number of times the unit distance occurs among $n$ points
in the plane, no three of which are collinear, is at least
$\Omega(n\log{n})$. Our proof of Theorem~\ref{thm:diff} can be viewed
as a refinement of their argument. 

\begin{proof}[Proof of Theorem~\ref{thm:diff}]
  Let $d_1,\ldots,d_k>0$ be arbitrary pairwise distinct distances. Let
  $X_0(m)$ denote a configuration of $m$ points and let $X_i(2m)$
  denote the union of $X_{i-1}(m)$ and a translate 
  of $X_{i-1}(m)$ by distance $d_i$ in some generic direction, so that
  none of the segments connecting the two copies duplicates a distance
  other than $d_i$. This is feasible since it amounts to excluding a
  set of directions of measure zero. 
  We start from a single point and apply translates by
  $d_1,\ldots,d_k$ in a cyclic fashion. The resulting set after $k$
  steps has $2^k \leq n$ points.  

  For any $1\leq i \leq k$, the multiplicity $T_i(n)$ of $d_i$ in a set of $n$ points constructed in the above way
  satisfies the recurrence
  \[ T_i(n) = 2^k T_i\left( n/2^k \right) + n/2, \ \ T_i(1)=0. \]
  Its solution satisfies
  \[ T_i(n) \geq \frac{n}{2} \log_{2^k}{n} = \frac{n}{2k} \log{n}. \] 
  In the inductive step corresponding to $d_i$, we ensure that none of the segments connecting the two copies
  duplicate a distance other than $d_i$. Consequently, any distance other than $d_1,\ldots,d_k$ occurs at most
  $n$ times. In conclusion, we have 
\[ a_1(X),\ldots,a_k(X) = \Omega\left(\frac{n}{k} \log{n}\right),
\text{ and } a_{k+1}(X), a_{k+2}(X), \ldots \leq n, \]
as required. 
\end{proof}

\section{Point sets with distinct distance multiplicities}\label{sec:distinct}

Given a set $X\subseteq\RR^2$ of $n\geq2$ points, which contains
$m=m(X)\leq\binom{n}{2}$ distinct distances, recall that
$a(X)=(a_1(X),\dots,a_m(X))$ consists of the multiplicities of all distances
ordered by $a_1(X) \geq a_2(X) \geq \dots \geq a_m(X)$.
How many distinct values can $a(X)$ contain? At most $n-1$, this
follows easily from $\sum_{k=1}^m a_k(X)=\binom{n}{2}$. Moreover, when
$a(X)$ contains $n-1$ distinct values, then
$a(X)=(n-1,n-2,\dots,1)$. One can observe that if $X$ consists of
equidistant points on 
a line or on a circle, see Figure~\ref{fig:circular_arc}~\textup{(a)} and~\textup{(b)} for an
illustration, then $a(X)=(n-1,n-2,\dots,1)$. Are there other 
constructions of $X$ that achieve $a(X)=(n-1,n-2,\dots,1)$?
Erd\H{o}s~\cite[p.~135]{Er84} conjectured the answer to be negative, when $n$ is large. Here we give
a simple counterexample to this conjecture. 

\begin{observation} \label{obs:simple}
  Let $\gamma$ be a circular arc subtending a center angle $<\pi/3$ on the circle $C$ of unit radius
  centered at $c$. Let $X$ consist of $c$ together with a set of $n-1$ equidistant points on $\gamma$.
  Then $a(X)=(n-1,n-2,\ldots,1)$. See Figure~\ref{fig:circular_arc}~\textup{(c)} for an illustration.
\end{observation}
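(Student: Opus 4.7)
The plan is to compute all pairwise distances in $X$ directly and read off their multiplicities. Let $\alpha$ denote the central angle between two consecutive equidistant points of $X$ on $\gamma$, so the arc spanned by the $n-1$ arc points has total central angle $(n-2)\alpha < \pi/3$.

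I would split the ${n \choose 2}$ pairs into two types. First, the $n-1$ segments joining $c$ to a point on $\gamma$ each have length $1$, contributing one distance of multiplicity $n-1$. Second, for each $k \in \{1,2,\ldots,n-2\}$ there are exactly $n-1-k$ pairs of arc points at combinatorial spacing $k$, and each such chord has length $2\sin(k\alpha/2)$. So provided that the $n-1$ values just listed (one radius value and $n-2$ chord values) are pairwise distinct, the multiplicity vector $a(X)$ is exactly $(n-1,n-2,\ldots,1)$: multiplicity $n-1$ comes from the radius, and multiplicity $n-1-k$ from the chord of length $2\sin(k\alpha/2)$.

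Pairwise distinctness uses the hypothesis $(n-2)\alpha < \pi/3$ in two places. Since $k\alpha/2 \leq (n-2)\alpha/2 < \pi/6 < \pi/2$ for all relevant $k$, the function $k \mapsto 2\sin(k\alpha/2)$ is strictly increasing on $\{1,\ldots,n-2\}$, so the $n-2$ chord lengths are distinct; and since $2\sin(k\alpha/2) < 2\sin(\pi/6) = 1$ for every such $k$, no chord length equals the radius $1$. There is essentially no obstacle beyond tracking these two inequalities — indeed, the bound $<\pi/3$ is tuned precisely so that both checks barely go through, becoming sharp exactly at the value where the longest chord would equal the radius and two multiplicity classes would merge.
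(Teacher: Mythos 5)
Your proposal is correct and follows the same route as the paper's (much terser) proof: the unit radius contributes multiplicity $n-1$, the chords at combinatorial spacing $k$ contribute multiplicity $n-1-k$, and the angle bound $<\pi/3$ guarantees all these lengths are distinct. The only quibble is your closing remark that \emph{both} checks become sharp at $\pi/3$: the monotonicity of $k\mapsto 2\sin(k\alpha/2)$ actually survives up to total angle $\pi$, and it is only the comparison of the longest chord with the radius that is tight at $\pi/3$.
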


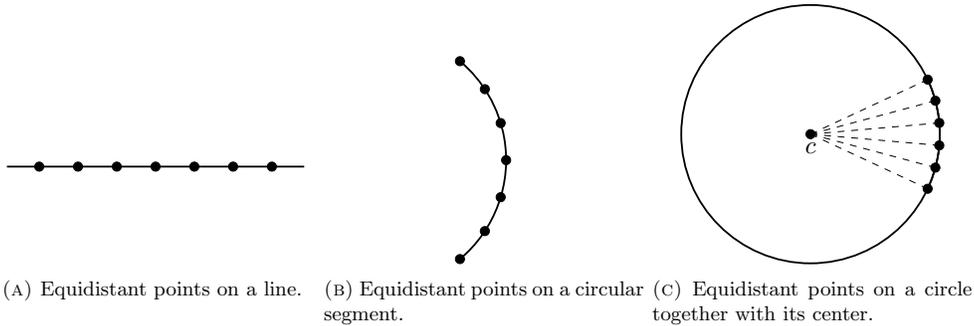
\begin{figure}[h]
  \centering
    \resizebox{0.8\textwidth}{!}{ %
    \begin{subfigure}{0.3\textwidth}
        \centering
        \begin{tikzpicture}
          \def\n{7}
            \def\spacing{0.6} 
            \def\shift{1.5}
            \foreach \i in {0,...,6} {
                \filldraw[black] (\i*\spacing,\shift) circle (2pt);
            }

            \draw[thick] (-0.5,\shift) -- (\n*\spacing - \spacing + 0.5,\shift);

            \draw (0.5,0) circle (0pt);
            
        \end{tikzpicture}

        \caption{Equidistant points on a line. \newline}
    \end{subfigure}
    \hfill
    \begin{subfigure}{0.3\textwidth}
        \centering
        \begin{tikzpicture}
            \def\radius{2}
            \def\angle{100} 
            \def\n{7} 

            \foreach \i in {0,...,6} {
                \pgfmathsetmacro{\theta}{- \angle/2 + \i * \angle / (\n-1)}
                \pgfmathsetmacro{\x}{\radius * cos(\theta)}
                \pgfmathsetmacro{\y}{\radius * sin(\theta)}
                
                \filldraw[black] (\x,\y) circle (2pt);
            }

            \pgfmathsetmacro{\startX}{\radius * cos(-\angle/2)}
            \pgfmathsetmacro{\startY}{\radius * sin(-\angle/2)}
            \draw[thick] (\startX, \startY) 
              arc[start angle=-\angle/2, end angle=\angle/2, x radius=\radius, y radius=\radius];
        \end{tikzpicture}
        \caption{Equidistant points on a circular segment.}
    \end{subfigure}
    \hfill
    \begin{subfigure}{0.3\textwidth}
        \centering
        \begin{tikzpicture}
            \def\radius{2}
            \def\angle{50} 
            \def\n{7}
            \def\n1{6}
            \def\center{(0,0)}

            \draw[thick] \center circle (\radius);

            \filldraw[black] \center circle (2pt) node[below] {$c$};

            \foreach \i in {1,...,\n1} {
                \pgfmathsetmacro{\theta}{- \angle/2 + (\i-1)*\angle/(\n1-1)}
                \pgfmathsetmacro{\x}{\radius * cos(\theta)}
                \pgfmathsetmacro{\y}{\radius * sin(\theta)}
                
                \filldraw[black] (\x,\y) circle (2pt);

                \draw[dashed] \center -- (\x,\y);
            }

            \pgfmathsetmacro{\startX}{\radius * cos(-\angle/2)}
            \pgfmathsetmacro{\startY}{\radius * sin(-\angle/2)}
            \draw[thick] (\startX, \startY) 
              arc[start angle=-\angle/2, end angle=\angle/2, x radius=\radius, y radius=\radius];

        \end{tikzpicture}
        \caption{Equidistant points on a circle together with its center.}
    \end{subfigure}
    }

    \caption{Three configurations $X$ satisfying $a(X)=(n-1,n-2,\ldots,1)$ for $n=7$.}
    \label{fig:circular_arc}
\end{figure}

\begin{proof}
  The multiplicities of the $n-1$ points on $\gamma$ are $1,2,\ldots,n-2$.
  Since the multiplicity of the unit distance is $n-1$, we have $a(X)=(n-1,n-2,\ldots,1)$.
  Finally, $X$ is not contained in any line or circle.
\end{proof}

One may further ask:

\begin{problem}
For sufficiently large $n\in\mathbb{N}$, are the examples in
Figure~\ref{fig:circular_arc} the only
point sets with $a(X)=(n-1,n-2,\ldots,1)$? Are these the only ones
with pairwise distinct distance multiplicities? 
\end{problem}

We answer the latter question in the negative. We also show that an integer grid is not a valid candidate.

\begin{proposition} \label{prop:distinct-mu}
For every $n\in\NN$, there is a set $X\subseteq\RR^2$ of $n$ points with
pairwise distinct distance multiplicities and $a(X) \neq (n-1,n-2,\ldots,1)$. 
\end{proposition}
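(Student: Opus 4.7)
The plan is to modify the arc-plus-center configuration of Observation~\ref{obs:simple} by tuning the angular spacing so that exactly one of the arc chords has length equal to the radius. This coincidence merges two entries of the ``bad'' sequence $(n-1, n-2, \ldots, 1)$ into a single, strictly larger entry, producing a set with only $n - 2$ distinct multiplicities (rather than $n - 1$) and hence automatically with $a(X) \neq (n-1, n-2, \ldots, 1)$.

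Concretely, assume $n \geq 3$, set $k := \lceil (n-2)/3 \rceil$ and $\alpha := \pi/(3k)$, and let $C$ be the unit circle centered at $c$. Place points $p_0, p_1, \ldots, p_{n-2}$ on $C$ at angles $0, \alpha, 2\alpha, \ldots, (n-2)\alpha$ and set $X := \{c, p_0, \ldots, p_{n-2}\}$, so that $|X| = n$. The chord between $p_i$ and $p_{i+j}$ has length $2\sin(j\alpha/2)$; because $(n-2)\alpha \leq \pi$ by the choice of $k$, every $j\alpha/2$ lies in $(0, \pi/2]$, where $\sin$ is strictly increasing, so the $n - 2$ chord lengths (one per separation $j = 1, \ldots, n-2$) are pairwise distinct. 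The identity $k\alpha = \pi/3$ further guarantees that the chord at separation $k$ has length $2\sin(\pi/6) = 1$, matching the radius of $C$.

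Counting pairs, the chord at separation $j$ appears $n - 1 - j$ times among $p_0, \ldots, p_{n-2}$, and $c$ contributes $n - 1$ additional unit-distance pairs; only the chord at separation $k$ collides with the radius. Hence
\[ a(X) = \bigl(\{1, 2, \ldots, n - 2\} \setminus \{n - 1 - k\}\bigr) \cup \{2n - k - 2\}, \]
written in decreasing order. The retained entries are obviously pairwise distinct, and $2n - k - 2 > n - 2$ (since $k \leq n - 2$), so all $n - 2$ multiplicities are distinct; moreover $\sum_i a_i(X) = \binom{n-1}{2} - (n - 1 - k) + (2n - k - 2) = \binom{n}{2}$, as required. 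Because $a(X)$ has only $n - 2 \neq n - 1$ entries, it cannot equal $(n-1, n-2, \ldots, 1)$. The main subtlety is the choice of $k$: we need $k \geq (n-2)/3$ so that all chord lengths lie in the monotone region of $\sin$, and $k \leq n - 2$ so that separation $k$ actually occurs; $k = \lceil (n-2)/3 \rceil$ is precisely the smallest integer meeting both constraints, and it is valid for every $n \geq 3$.
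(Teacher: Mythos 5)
Your proof is correct, and it takes a genuinely different route from the paper. The paper builds $X$ from two adjacent rows of a hexagonal lattice (with $k+1$ and $k$ points for $n=2k+1$), where the integer distances $1,\ldots,k$ get odd multiplicities $4k-1, 2(k-j)+1$ and the irrational distances $\sqrt{j^2+j+1}$ get even multiplicities $2(k-j)$, all pairwise distinct. You instead perturb the arc-plus-center configuration of Observation~\ref{obs:simple}: by choosing the angular step $\alpha=\pi/(3k)$ with $k=\lceil (n-2)/3\rceil$ you force exactly one chord class (separation $k$) to coincide with the radius, merging two multiplicities of the sequence $(n-1,n-2,\ldots,1)$ into the single value $2n-k-2$, which exceeds $n-2$ and hence stays distinct from the surviving values $\{1,\ldots,n-2\}\setminus\{n-1-k\}$. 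All the details check out: the condition $k\geq (n-2)/3$ keeps every half-angle $j\alpha/2$ in $(0,\pi/2]$ so the chord lengths are strictly monotone in the separation, the pair counts $n-1-j$ and the sum $\binom{n}{2}$ are right, and having only $n-2$ entries already rules out $a(X)=(n-1,\ldots,1)$. Your construction has the appeal of being a minimal modification of the counterexample the paper already exhibits, and it handles both parities of $n$ uniformly, whereas the paper presents odd $n$ and leaves even $n$ to the reader; the paper's lattice strip, on the other hand, yields a cleaner closed-form list of multiplicities. Both arguments implicitly require $n\geq 3$ (for $n\leq 2$ the statement is degenerate since $a(X)$ is forced), which is a triviality neither you nor the paper needs to belabor.
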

\begin{proof}
We present the proof for odd $n$; the case of even $n$ is analogous and left to the reader.
Let $X$ be a piece of the hexagonal lattice of side length $1$ with $n=2k+1$
points placed on two adjacent horizontal lines $\ell_1,\ell_2$, so that 
$|X \cap \ell_1|=k+1$ and $|X \cap \ell_2|=k$. See Figure~\ref{fig:tworows}~(left) for an illustration.

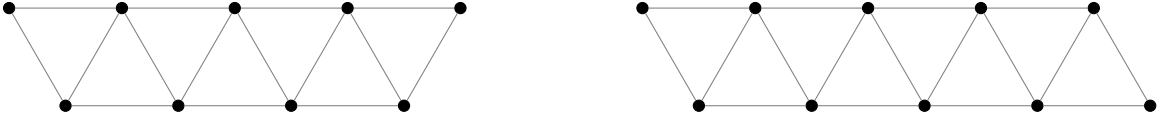
\begin{figure}[htbp]
\centering
\begin{subfigure}{0.47\textwidth}
\centering
\begin{tikzpicture}[
  every node/.style={circle, draw, fill=black, inner sep=1.5pt},
]

\def\L{1.5} 
\def\h{1.5*sqrt(3)/2} 

\foreach \i in {0,...,8} {
  \pgfmathsetmacro{\x}{\i*\L/2}
  \pgfmathsetmacro{\y}{mod(\i,2)==0 ? 0 : -\h}
  \node (p\i) at (\x,\y) {};
}

\foreach \i in {0,...,7} {
  \pgfmathtruncatemacro{\j}{\i+1}
  \draw[gray] (p\i) -- (p\j);
}
\foreach \i in {0,...,6} {
  \pgfmathtruncatemacro{\j}{\i+2}
  \draw[gray] (p\i) -- (p\j);
}

\end{tikzpicture}
\end{subfigure}
\hfill
\begin{subfigure}{0.47\textwidth}
\centering
\begin{tikzpicture}[
  every node/.style={circle, draw, fill=black, inner sep=1.5pt},
]

\def\L{1.5} 
\def\h{1.5*sqrt(3)/2} 

\foreach \i in {0,...,9} {
  \pgfmathsetmacro{\x}{\i*\L/2}
  \pgfmathsetmacro{\y}{mod(\i,2)==0 ? 0 : -\h}
  \node (p\i) at (\x,\y) {};
}

\foreach \i in {0,...,8} {
  \pgfmathtruncatemacro{\j}{\i+1}
  \draw[gray] (p\i) -- (p\j);
}
\foreach \i in {0,...,7} {
  \pgfmathtruncatemacro{\j}{\i+2}
  \draw[gray] (p\i) -- (p\j);
}

\end{tikzpicture}
\end{subfigure}
\caption{Point sets ($n=9$ and $n=10$) with pairwise distinct distance multiplicities
  and $a(X) \neq (n-1,n-2,\ldots,1)$.}
\label{fig:tworows}
\end{figure}

There are two types of distances in $X$, integer and irrational. The integer
distances are $\{1,\ldots,k\}$, determined by points on the same horizontal
line, or by points with consecutive $x$-coordinates on different horizontal
lines. The irrational distances occur between nonconsecutive points on different
horizontal lines. 
Let these be $d_1<d_2< \cdots < d_{k-1}$, where $d_j = \sqrt{j^2+j+1}$ for $j=1,\ldots,k-1$.
It is not difficult to verify that
\begin{enumerate} [(i)] \itemsep 2pt
\item $\mu(X,1)=4k-1$.
\item $\mu(X,j) = 2(k-j)+1, \text{ for } j=2,\ldots,k$.
\item $\mu(X,d_j) = 2(k-j), \text{ for } j=1,\ldots,k-1$.
\end{enumerate}
The multiplicities are clearly distinct and this completes the proof.
\end{proof}

\begin{observation}
    Let $k\geq 4$. In the $k \times k$ grid there are two distances which appear exactly $8$ times each.
\end{observation}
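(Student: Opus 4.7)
The plan is to exhibit two explicit distances in the $k\times k$ grid and show each has multiplicity exactly $8$. View the grid as $\{0,1,\ldots,k-1\}^2$, so that for each distance $d$ the multiplicity is
\[
\mu(d) = \frac12 \sum_{\substack{(a,b)\in \ZZ^2\setminus\{(0,0)\} \\ a^2+b^2=d^2}} \max(0,k-|a|)\max(0,k-|b|),
\]
because for each nonzero vector $(a,b)$ the number of points $P$ with $P,P+(a,b)$ both in the grid is $(k-|a|)(k-|b|)$ (or zero), and $(a,b),(-a,-b)$ represent the same unordered pair. The two candidates will be $d_1=\sqrt{(k-1)^2+(k-2)^2}$ and $d_2=\sqrt{2}\,(k-2)$, which are distinct for $k\geq 3$ since $(k-1)^2+(k-2)^2=2(k-2)^2$ would force $k-1=k-2$.

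For $d_1$, I first classify the lattice representations $a^2+b^2=(k-1)^2+(k-2)^2$ with $0\leq a\leq b\leq k-1$. If $b\leq k-2$ then $a^2+b^2\leq 2(k-2)^2<(k-1)^2+(k-2)^2$, so necessarily $b=k-1$, which forces $a=k-2$. Thus the representations $(a,b)\in\ZZ^2$ of $d_1^2$ with $|a|,|b|\leq k-1$ are precisely the $8$ vectors $(\pm(k-1),\pm(k-2))$ and $(\pm(k-2),\pm(k-1))$. Each contributes $(k-|a|)(k-|b|)=1\cdot 2=2$ to the sum above, giving $\mu(d_1)=\frac12\cdot 8\cdot 2=8$.

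For $d_2$, I similarly enumerate representations of $2(k-2)^2=a^2+b^2$ with $0\leq a\leq b\leq k-1$. If $b\leq k-2$ then $a^2+b^2\leq 2(k-2)^2$ with equality only at $(k-2,k-2)$. The one remaining case is $b=k-1$, which gives $a^2=2(k-2)^2-(k-1)^2=k^2-6k+7=(k-3)^2-2$; this is a nonnegative perfect square only if the equation $(k-3)^2-a^2=2$ has an integer solution, i.e.\ $(k-3-a)(k-3+a)=2$, which is impossible since the factors have the same parity. Hence the only lattice representations are the $4$ vectors $(\pm(k-2),\pm(k-2))$, each contributing $2\cdot 2=4$, so $\mu(d_2)=\frac12\cdot 4\cdot 4=8$.

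The only real obstacle is ensuring that no ``accidental'' extra representation of $d_1^2$ or $d_2^2$ sneaks in from within the allowed range $[0,k-1]^2$; this is handled above by the size comparison $a^2+b^2\leq 2(k-2)^2$ together with the short Diophantine check for the boundary case $b=k-1$. Everything else is routine.
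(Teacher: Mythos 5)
Your proof is correct and follows essentially the same route as the paper: the same two distances $d_1=\sqrt{(k-1)^2+(k-2)^2}$ and $d_2=\sqrt{2}\,(k-2)$, the same classification of the possible difference vectors, and the same Diophantine check ruling out a coordinate difference of $k-1$ for $d_2$. Your version is in fact slightly more complete, since the weighted count $\frac12\sum(k-|a|)(k-|b|)$ and the size comparison $a^2+b^2\leq 2(k-2)^2<(k-1)^2+(k-2)^2$ make explicit the exhaustiveness that the paper's proof asserts by listing the eight pairs.
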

\begin{proof}
The distance $d_1=\sqrt{(k-1)^2+(k-2)^2}$ appears among the pairs: 
\begin{gather*}\{(0,0),(k-1,k-2)\},  \{(0,0),(k-2,k-1)\}, \{(1,0),(k-1,k-1)\}, 
\{(0,1),(k-1,k-1)\}, \\
\{(k-1,0),(0,k-2)\}, \{(k-1,0),(1,k-1)\}, \{(k-2,0),(0,k-1)\}, \{(k-1,1),(0,k-1)\}.
\end{gather*}
The distance $d_1$ can only appear between point pairs $(x_1,y_1)$ and $(x_2,y_2)$ where
$(|x_1-x_2|,|y_1-y_2|)\in\{(k-1,k-2),(k-2,k-1)\}$, and thus cannot appear more 
than eight times, as shown above. The distance $d_2=\sqrt{2(k-2)^2}$ appears among the pairs: 
\begin{gather*}\{(0,0),(k-2,k-2)\}, \{(1,0),(k-1,k-2)\}, \{(0,1),(k-2,k-1)\}, 
\{(1,1),(k-2,k-2)\}, \\
\{(k-1,0),(1,k-2)\}, \{(k-1,1),(1,k-1)\}, \{(k-2,0),(0,k-2)\}, \{(k-2,1),(0,k-1)\}.
\end{gather*}
Note that this is an exhaustive list of all point pairs $(|x_1-x_2|,|y_1-y_2|)=(k-2,k-2).$
It is not possible that $|x_1-x_2|=k-1$ (respectively $|y_1-y_2|=k-1$), since the equation
$$ (k-1)^2+(x)^2=2(k-2)^2 $$ 
does not have a solution. Indeed, if

$$ x^2= 2(k-2)^2-(k-1)^2=k^2-6k+7,$$
then
$$ k=3 \pm \sqrt{3^2-(7-x^2)}=3\pm \sqrt{2+x^2}, $$
but $x^2+2$ is not square for $x\in \mathbb{N}$.
\end{proof}


\begin{thebibliography}{99}

\bibitem{Alt63}
E.~Altman,
On a problem of P.~Erd\H{o}s,
\emph{The American Mathematical Monthly},
\textbf{70(2)} (1963), 148--157.

\bibitem{Bh24}
K.~Bhowmick,
A note on a problem of Erd{\H{o}}s about rich distances,
\emph{Studia Scientiarum Mathematicarum Hungarica},
\textbf{62(1)} (2025), 89--94.

\bibitem{BMP05}
P.~Bra\ss, W.~Moser, and J.~Pach,
\emph{Research Problems in Discrete Geometry},
Springer, New York, 2005.

\bibitem{Er46}
P.~Erd\H{o}s,
On sets of distances of $n$ points,
\emph{The American Mathematical Monthly},
\textbf{53} (1946), 248--250.

\bibitem{Er83}
P.~Erd\H{o}s,
Extremal problems in number theory, combinatorics and geometry,
in \emph{Proceedings of the International Congress of Mathematicians}, Warsaw, 1983.

\bibitem{Er84}
P.~Erd\H{o}s,
Some old and new problems in combinatorial geometry,
\emph{Annals of Discrete Mathematics},
\textbf{20} (1984), 129--136.

\bibitem{Er90}
P.~Erd\H{o}s,
Some of my favourite unsolved problems,
in \emph{A Tribute to Paul Erd\H{o}s} edited by A.~Baker, B.~Bollob\'as, and A.~Hajnal,
Cambridge University Press, 1990.

\bibitem{Er97}
P.~Erd\H{o}s,
Some old and new problems in various branches of combinatorics,
\emph{Discrete Mathematics},
\textbf{165/166} (1997), 227--231.

\bibitem{EF95a}
P.~Erd\H{o}s and P.~Fishburn,
Multiplicities of interpoint distances in finite planar sets,
\emph{Discrete Applied Mathematics},
\textbf{60} (1995), 141--147.

\bibitem{EP90}
P.~Erd\H{o}s and J.~Pach,
Variations on the theme of repeated distances,
\emph{Combinatorica},
\textbf{10(3)} (1990), 261--269.

\bibitem{EP76}
P.~Erd\H{o}s and G.~Purdy,
Some extremal problems in geometry IV,
\emph{Congressus Numerantium},
\textbf{17} (1976), 307--322. 

\bibitem{Fi95}
P.~Fishburn,
Convex polygons with few intervertex distances,
\emph{Computational Geometry},
\textbf{5(2)} (1995), 65--93.

\bibitem{Fi96}
P.~Fishburn,
Distances in convex polygons,
in \emph{The Mathematics of Paul Erd\H{o}s} edited by R.~L.~Graham, J.~Ne\v{s}et\v{r}il, and S.~Butler,
Springer Verlag, New York, 2013.

\bibitem{Gr85}
E.~Gro{\ss}wald,
\emph{Representations of Integers as Sums of Squares}, 
Springer Verlag, New York, 1985.

\bibitem {GK15}
L.~Guth and N.~H.~Katz,
On the Erd{\H{o}}s distinct distances problem in the plane,
\emph{Annals of Mathematics},
\textbf{181} (2015), 155--190.

\bibitem{HP34}
H.~Hopf and E.~Pannwitz,
Aufgabe Nr. 167, 
\emph{Jahresbericht der Deutschen Mathematiker-Vereinigung},
\textbf{43} (1934), 114.

\bibitem{LT95}
H.~Lefmann and T.~Thiele,
Point sets with distinct distances,
\emph{Combinatorica},
\textbf{15(3)} (1995), 379--408.

\bibitem{PA95}
J.~Pach and P.~K.~Agarwal,
\emph{Combinatorial Geometry},
John Wiley, New York, 1995.

\bibitem{SST84}
J.~Spencer, E.~Szemer\'edi, and W.~T.~Trotter,
Unit distances in the Euclidean plane,
in \emph{Graph Theory and Combinatorics},
Academic Press, London, 1984.

\bibitem{Sz97}
L.~Sz\'ekely,
Crossing numbers and hard Erd\H{o}s problems in discrete geometry,
\emph{Combinatorics, Probability and Computing},
\textbf{6} (1997), 353--358.

\bibitem{Sz16}
E.~Szemer\'edi,
Erd\H{o}s’s unit distance problem, 
in \emph{Open Problems in Mathematics} edited by J.~F.~Nash Jr. and M.~Th.~Rassias, 2016.

\bibitem{Ve85}
K.~Vesztergombi,
On the distribution of distances in finite sets in the plane,
\emph{Discrete Mathematics},
\textbf{57(1-2)} (1985), 129--145.

\bibitem{Ve87}
K.~Vesztergombi,
On large distances in planar sets,
\emph{Discrete Mathematics},
\textbf{67(2)} (1987), 191--198.

\bibitem{Ve96}
K.~Vesztergombi,
The two largest distances in finite planar sets,
\emph{Discrete Mathematics},
\textbf{150(1-3)} (1996), 379--386.

\bibitem{YB61}
I.~M.~Yaglom and V.~G.~Boltyanskii,
\emph{Convex Figures},
Holt, Rinehart and Winston, New York, 1961.

\end{thebibliography}
\end{document}